\newcommand{\argmin}[1]{\underset{#1}{\mathrm{argmin}}}
\newcommand{\minimize}[1]{\underset{#1}{\mathrm{minimize}}}
\newcommand{\lmo}{\mathrm{LMO}}
\newcommand{\gap}{\mathbf{gap}}
\newcommand{\mD}{\mathcal D}
\newcommand{\mS}{\mathcal S}
\newcommand{\mN}{\mathcal N}
\newcommand{\R}{\mathbb R}
\newcommand{\bs}{\mathbf s}
\newcommand{\bx}{\mathbf x}
\newcommand{\bg}{\mathbf g}
\newcommand{\bz}{\mathbf z}
\newcommand{\bv}{\mathbf v}
\newcommand{\by}{\mathbf y}
\newcommand{\bh}{\mathbf h}
\newcommand{\FW}{\textsc{FW}}
\newcommand{\FWflow}{\textsc{FWFlow}}
\newcommand{\AvgFW}{\textsc{AvgFW}}
\newcommand{\AvgflowFW}{\textsc{AvgFWFlow}}
\newcommand{\sign}{\mathbf{sign}}
\newcommand{\conv}{\mathbf{conv}}
\newtheorem{theorem}{Theorem}[section]
\newtheorem{lemma}[theorem]{Lemma}
\newtheorem{corollary}[theorem]{Corollary}
\theoremstyle{definition}
\newcommand{\red}[1]{{\color{red}#1}}
\title{Accelerating Frank-Wolfe via Averaging Step Directions}
\author{%
  Zhaoyue Chen \\
  \texttt{zhaoychen@cs.stonybrook.edu} \\
  \AND
  Yifan Sun \\
  \texttt{ysun@cs.stonybrook.edu} \\
}
\begin{document}

\maketitle

\begin{abstract}
 The Frank-Wolfe method is a popular method in sparse constrained optimization, due to its fast per-iteration complexity. However, the tradeoff is that its worst case global convergence is comparatively slow, and importantly, is fundamentally slower than its flow rate--that is to say, the convergence rate is throttled by discretization error. 
  In this work, we consider a modified Frank-Wolfe where the step direction is a simple weighted average of past oracle calls. This method requires very little memory and computational overhead, and provably decays this discretization error term. 
 Numerically, we show that this method improves the convergence rate over several problems, especially after the sparse manifold has been detected. Theoretically, we show the method has an overall global convergence rate of $O(1/k^p)$, where $0< p < 1$; after manifold identification, this rate speeds to  $O(1/k^{3p/2})$. We also observe that the method achieves this accelerated rate from a very early stage, suggesting a promising mode of acceleration for this family of methods.
\end{abstract}
 
\section{Introduction}
The Frank-Wolfe (FW) method (or conditional gradient method) \citep{dunn1978conditional,frank1956algorithm} solves the constrained optimization problem 
\begin{equation}
\minimize{x\in \mD} \quad f(x)
\label{eq:main}
\end{equation}
via the repeated iterations
\[
    \begin{array}{rcl}
\bs_k &=& \argmin{s\in \mD} \; \nabla f(\bx_k)^T\bs, \\
\bx_{k+1} &=& \bx_k + \gamma_k (\bs_k - \bx_k).
    \end{array}
    \tag{\FW}
\]
Here, we assume that $\mD$ is a compact, convex set and $f:\R^n\to\R$ is an $L$-smooth $\mu$-strongly convex function. The operation in the first line is often referred to as the \emph{linear minimization oracle (LMO)}, and will be expressed succinctly as $\bs_k=:\lmo_\mD(\bx_k)$.
The FW method has been used in many machine learning applications, such as compressed sensing, recommender systems \citep{Freund2017AnEF}, image and video co-localization \citep{Joulin2014EfficientIA}, etc.

A major area of research concerns the slow convergence rate of the FW method. In particular, using a fixed step size sequence $\gamma_k = O(1/k)$, in general the FW method does not converge faster than $O(1/k)$ in objective value. It has been surmised that this is caused by \emph{zig-zagging}, e.g. periodic erratic behavior of the terms $\bs_k$, causing $\bx_k$ to have bad step directions even arbitrarily close to the solution. For example, this phenomenon was explored in \citet{chen2021continuous}, which showed that in its \emph{continuous} form, the method does not zigzag and also reaches an arbitrarily high convergence rate.
Moreover, even better multistep discretization methods cannot close this gap.
Overall, the conclusion was reached that the largest contributer to the slow convergence rate of FW is the discretization error, which may be reduced as a function of $\Delta$ (the discretization step length), but not  $k$ (the iteration counter).

\subsection{Assumptions and contributions} 

In this work, we investigate the discretization error term more carefully, and construct an \emph{averaged FW method} as a simple approach to reduce zig-zagging and decay this key discretization term. 

\paragraph{Assumptions.} Throughout this work, important assumptions are: compact and convex $\mD$ (so that the LMO is consistent) and $L$-smooth $f$ (so that gradients are computable and do not diverge wildly). The extra assumption of convexity is required for the global convergence rate, but is not needed for method implementation. Additionally, $\mu$-strong convexity is only needed for the accelerated local rate, and we believe is a proof artifact; in practice, we observe $\mu = 0$ achieves similar speedups.

\paragraph{Contributions.}
Overall, for strongly convex objectives, if averaging is done with coefficients $(\tfrac{c}{c+k})^p$ for constants $c \geq 3p/2+1$, $0< p < 1$, we show 
\begin{itemize}
    \item global convergence  of $O(1/k^p)$ for the method and $O(1/t^{1-p})$ for the flow, and
    \item local convergence  of $O(1/k^{3p/2})$ for the method and $O(\log(t)/t^{c})$ for the flow.
\end{itemize}

The differentiation between local and global convergence  is characterized by the identification of the sparse manifold ($\bar k$, where $\lmo(\bx_k) = \lmo(\bx^*)$ for all $k \geq \bar k$) \citep{hare2004identifying,sun2019we}; this is in similar spirit to works like \citet{liang2014local}, which characterize this behavior in proximal gradient methods and other sparsifying methods.
Overall, these results suggest improved behavior for sparse optimization applications, which is the primary beneficiary of this family of methods.

\subsection{Related works}

\paragraph{Accelerated FW.}
A notable work that highlights the notorious zig-zagging phenomenon is
\citet{lacoste2015global}, where an Away-FW method is proposed that cleverly removes offending atoms and improves the search direction. Using this technique, the method is shown to achieve linear convergence under strong convexity of the objective. The tradeoff, however, is that this method requires keeping  past atoms, which may incur an undesired memory cost.
A work that is particularly complementary to ours is \citet{garber2015faster}, which show an improved $O(1/k^2)$ rate when the \emph{constraint set} is strongly convex--this reduces zigzagging since solutions cannot lie in low-dimensional ``flat facets''.
Our work addresses the exact opposite regime, where we take advantage of ``flat facets'' in sparsifying sets (1-norm ball, simplex, etc). This allows the notion of \emph{manifold identification} as determining when suddenly the method behavior improves.

\paragraph{Averaged FW.} Several previous works have investigated \emph{gradient averaging} \citep{acceleratingFW,FWEquilibrium}. While performance seems promising, the rate was not improved past $O(\frac{1}{k})$. 
\citet{ding2020k} investigates \emph{oracle averaging} by solving small subproblems at each iteration to achieve optimal weights.
The work \citet{chen2021continuous} can also be viewed as an oracle averaging technique, where the weights are chosen inspired by higher order discretization methods.
In comparison, in this work the averaging is \emph{intentionally unintelligent}, and the goal is to see how much benefit can be gained simply through this basic approach.

\paragraph{Game theory and fictitious play.} A similar version of this method appeared in~\cite{hofbauer2009time}. Here, Frank-Wolfe methods are shown to be an instance of a Best Response method over linearized function information at each step. Furthermore, if we imagine players having partial information and instead drawing from a distribution of best responses, this becomes  fictitious play, which results in an averaged LMO method. Indeed, our averaged FW method can be viewed as a Best Response Fictitious Play over a uniform distribution of all past LMOs. Therefore, the convergence results in this paper can therefore be applied to the
 Best Response Fictitious Play  as well.
 
\paragraph{Sparse optimization}
Other works that investigate \emph{local convergence behavior} include  \citet{liang2014local,liang2017local,poon2018local}. Here, problems which have these two-stage regimes are described as having \emph{partial smoothness}, which allows for the low-dimensional solution manifold to have significance. 
Other works that investigate manifold identification include \citet{sun2019we,iutzeler2020nonsmoothness,nutini2019active, hare2004identifying}.
We investigate manifold identification and local convergence of Frank-Wolfe as follows. Since zig-zagging often appears when the solution is on a low-dimensional manifold, we differentiate a local convergence regime of when this manifold is ``identified", e.g. all future LMOs are drawn from vertices of this specific manifold. After this point, we show that convergence of both the proposed flow and method can be improved with a decaying discretization error, which in practice may be fast.

\section{The problem with vanilla Frank-Wolfe}
In order to motivate the new averaged method, let us quickly review the convergence proof for FW method and flow, below
\[
    \begin{array}{rcl}
s(t)& =& \lmo_\mD(-\nabla f(x(t)) \\[1ex]
\dot x(t) &=& \gamma(t) (s(t)-x(t))
    \end{array}
    \;\;(\FWflow),
    \qquad\qquad
     \begin{array}{rcl}
\bs_k& =& \lmo_\mD(-\nabla f(\bx_k) \\[1ex]
\bx_{k+1} &=& \bx_k + \gamma_k (\bs_k-\bx_k)
    \end{array}
    \;\;(\FW).
\]
Specifically, the \FWflow~\citep{jacimovic1999continuous} is the continuous-time trajectory for which the \FW~\citep{frank1956algorithm,dunn1978conditional} is its Euler's explicit discretization.
Note that flows are not an implementable method, but rather an analysis tool that bypasses discretization artifacts.

Consider now the trajectory of the objective flow loss $h(t):=f(x(t))-\min_{x\in\mD}f(x)$. Using the properties of the LMO and convexity of $f$, it can be shown that
\begin{equation}
\dot h(t) = \nabla f(x)^T\dot x(t) = \gamma(t)\nabla f(x)^T(s-x)
% \overset{\text{LMO}}{\leq} \gamma(t)\nabla f(x)^T(x^*-x) \overset{\text{convexity}}{\leq} 
\leq-\gamma(t) h(t)
\label{eq:diffeq-vanilla-flow}
\end{equation}
and taking $\gamma(t) = \tfrac{c}{c+t}$, we arrive at 
\[
h(t) \leq \frac{h(0)}{(c+t)^c} = O(1/t^c).
\]
In contrast, using $L$-smoothness, the \FW~method 
satisfies the recursion
\begin{eqnarray*}
f(\bx_{k+1})-f(\bx_k) &\leq& \gamma_k\underbrace{\nabla f(\bx_k)^T(\bx_{k}-\bs_k)}_{\leq -\gap(x)} + \frac{L\gamma_k^2}{2}\underbrace{\|\bx_{k}-\bs_k\|_2^2}_{= (2D)^2}\\
\end{eqnarray*}
where $D = \max_{x\in \mD}\|x\|_2$ and the duality gap $\gap(\bx) \geq f(\bx)-f(\bx^*)$ for all convex $f$. Defining $\bh_k = \bh(\bx_k)$
\begin{equation}
\bh_{k+1}-\bh_k \leq -\gamma_k \bh_k + 2LD^2\gamma_k^2
\label{eq:diffeq-vanilla}
\end{equation}
which recursively gives $\bh_k = O(\tfrac{c}{c+k})$. 
Note the key difference in \eqref{eq:diffeq-vanilla} and its analogous terms in \eqref{eq:diffeq-vanilla-flow} is the extra $2LD^2\gamma_k^2 = O(1/k^2)$ term, which throttles the recursion from doing better than $\bh_k = O(1/k)$; in particular, $\bh_{k+1}-\bh_k \geq \Omega(1/k^2)$. 
One may ask if it is possible to bypass this problem by simply picking $\gamma_k = O(1/k^p)$ more aggressively, e.g. $p > 1$; however, then such a sequence becomes summable, and  convergence of $\bx_k\to \bx^*$ is not assured. Therefore, we must have a sequence $\gamma_k$ converging \emph{at least} as slowly as $O(1/k)$.

Thus, the primary culprit in this convergence rate tragedy is the bound $\|\bs_k-\bx_k\|_2=O(D)$ (nondecaying). This bound is also not in general loose; if $\bx^*$ is in the interior of $\mD$, then even at optimum, $\bs_k$ may be any vertex in $\mD$ and may bounce around the boundary of the set. 
As an example, consider the 1-D optimization $\displaystyle\min_{-1\leq x \leq 1} x^2$.
Then, assuming $\bx_0 \neq 0$, $\bs_k = \sign(-\bx_k)$ for all $k$, and
\[
\|\bx_k-\bs_k\|_2 = \|\bx_k + \sign(\bx_k)\|_2 \geq 1, \quad \forall k.
\]
That is to say, this error term \emph{does not decay in general, even when $\bx_k \approx \bx^*$}.

\newpage
{
\begin{wrapfigure}{r}{0.35\textwidth}
    \centering
\includegraphics[width=0.2\textwidth]{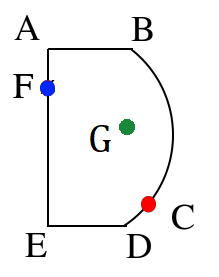}
    \caption{Example of a set $\mD$. Here, $A$, $B$, $D$, and $E$ are its isolated extremal vertices; and $F$ and $C$ are two candidate solution points.}
    \label{fig:vertices}
\end{wrapfigure}
Specifically, consider $\mS(\bx^*)$ the set of all possible LMOs at $\nabla f(\bx^*)$. \footnote{This set $\mS(\bx^*)$ is actually the subdifferential $\partial \sigma_\mD(-\nabla f(\bx^*))$, where $\sigma_\mD(z)$ is the support function of $\mD$ at $z$. Note that this is a well-defined quantity; although we do not require $f$ to be strongly convex (and thus $\bx^*$ may not be unique), we do require it to be $L$-smooth (and thus $\nabla f(\bx^*)$ \emph{is} unique).}
There are three possible scenarios.
\begin{itemize}[leftmargin=*]
    \item \textbf{Case I} (C): $\mS(\bx^*)$ contains only 1 element. This occurs when $\mD$ is a \emph{strongly convex  set}, like the 2-norm ball. It can also be a feature in parts of the boundary of other sets, like the 1,2-norm ball, group norm ball, etc.
    
    \item \textbf{Case II} (F): $\mS(\bx^*)$ contains multiple elements. This occurs when $\bx^*$ is on the boundary of $\mD$, which itself  is an \emph{atomic} set, e.g. the simplex, the $\ell_1$ ball, and linear transformations of these. This also includes  the nuclear norm ball and the group norm ball.
    
    \item \textbf{Case III} (G): $\bx^*$ is in the interior of any set, and $\mS(\bx^*)$ contains multiple elements, whether $\mD$ is strongly convex or not.
\end{itemize}
}
These three cases are illustrated in Figure \ref{fig:vertices}; at $C$, the LMO is necessarily unique, and spits out $\bs_k=\bx_k$; at $F$, the LMO may choose vertices $A$ or $E$, and will oscillate between them forever. Similarly, at $G$, the LMO will bounce between all points of which $G$ is its convex combination, forever.
\begin{figure}
\centering
\includegraphics[width=.8\textwidth]{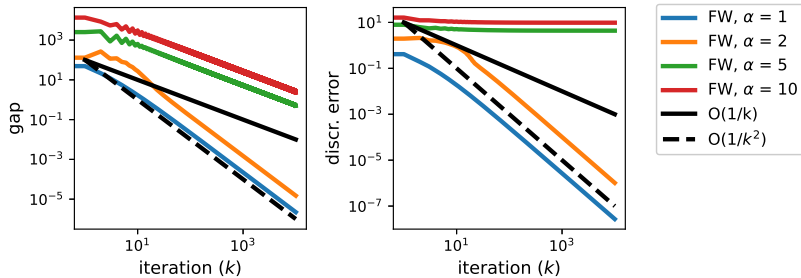}\\
\caption{An example of discretization error decay and method improvement when $\bx^*$ is on the boundary of a strongy convex set (in this case, the 2-norm ball). Unconstrained, $\|\bx^*\|_2 \approx 2.44$.  }
\label{fig:2normdecay}
\end{figure}
We quickly discuss Case I, addressed by past works \citep{garber2015faster}, where it is shown \FW~has a $O(1/k^2)$ (accelerated) convergence rate when $\bx^*$ is on the boundary of a strongly convex set.
The transition from case I to case III is shown in Figure \ref{fig:2normdecay}, where if $\alpha$ is small (and $\bx^*$ is on the boundary of the 2-norm ball) convergence is accelerated (and discretization error decays); on the other hand, when $\alpha$ is large (and $\bx^*$ is in the interior), we see no such acceleration.
Therefore, in cases II and III, an entirely new method must be defined to reduce this error term.

\section{An averaged Frank-Wolfe method}
We now propose an LMO-averaged Frank-Wolfe (AvgFW) method, by replacing $\bs_k$ with an averaged version $\bar \bs_k$. 
%The benefit of this is two-fold. 
The proposed flow and method are as follows:

\[
\begin{array}{rcl}
s(t) &=& \lmo_\mD(x(t))\\[1ex]
{\dot {\bar s}(t)}&=&  \beta(t) (s(t)-\bar s(t))\\[1ex]
\dot x(t) &=&   \gamma(t) (\bar s(t)-x(t))
\end{array}
(\AvgflowFW),
\quad
\begin{array}{rcl}
\bs_k &=& \lmo_\mD(\bx_k)\\[1ex]
\bar \bs_{k} &=& \bar \bs_{k-1} + \beta_k (\bs_k-\bar \bs_{k-1})\\[1ex]
 \bx_{k+1} &=&  \bx_k + \gamma_k (\bar \bs_k - \bx_k)
\end{array}
(\AvgFW)
\]

where $\beta_k = (\frac{c}{c+k})^p$ for $c \geq 0$ and $0< p \leq 1$. \footnote{Recall that $\gamma_k = c/(c+k)$. While it is possible to use $\beta(t) = b/(b+t)$ with $b\neq c$ in practice, our proofs considerably simplify when the constants are the same.}
Here, the smoothing in $\bar s_k$ and $\bar \bs_k$ has two roles.
First, averaging reduces zigzagging; at every $k$, $\bar \bs_k$ is a convex combination
 of past $\bs_k$, and has a smoothing effect that qualitatively also reduces zig-zagging; this is of importance should the user wish to use line search or momentum-based acceleration.
 Second, this forces the discretization  term $\|\bx_{k}-\bar \bs_k\|_2$ to \emph{decay}, allowing for faster numerical performance.

 \begin{figure}
\centering
\includegraphics[width=.25\linewidth,trim={.5ex 0 .5ex 0},clip]{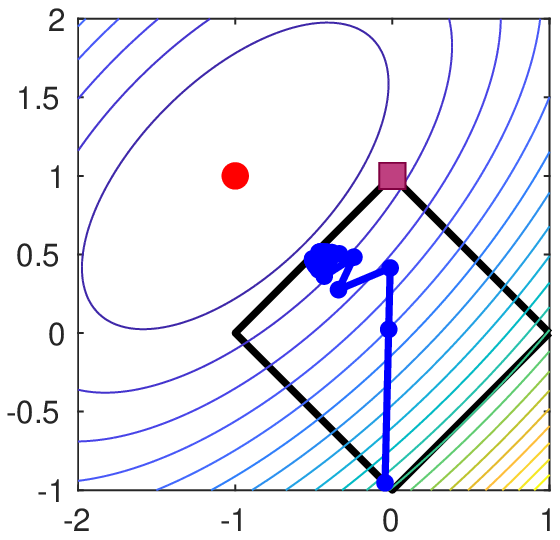}\hspace{1cm}
\includegraphics[width=.25\linewidth,trim={.5ex 0 .5ex 0},clip]{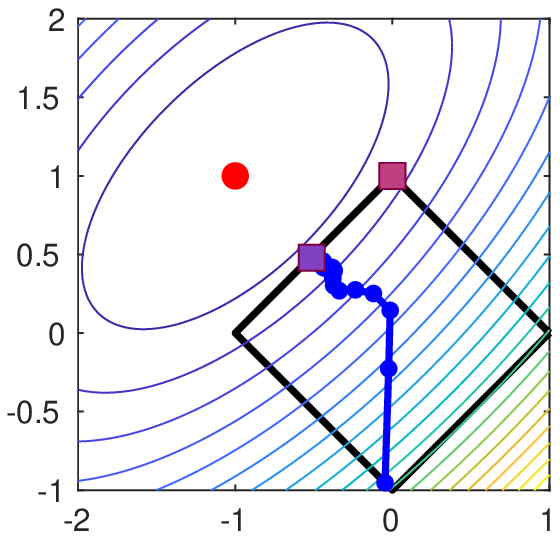}
\caption{\textit{Top}: \FW, with last  atom $\bs_k$ (light purple). \textit{Bottom}: \AvgFW, with last  atom $\bs_k$ (light purple), and  averaged  atom $\bar \bs_k$ (dark purple).}
\label{fig:trajectories}
\end{figure}
Figure \ref{fig:trajectories} shows a simple 2D example of our \AvgFW method (right) compared to the usual \FW method (left).
It is clear that  the averaged discretization term $\|\bar \bs_k-\bx_k\|_2$  decays, whereas the vanilla discretization term $\|\bs_k-\bx_k\|_2$ never converges.

\section{Convergence analysis}
\subsection{Global convergence}
We start by showing the method converges, with no assumptions on sparsity or manifold identification. While in practice we see faster convergence of \AvgFW compared to \FW, averaging produces challenges in the convergence proof.

Specifically, we require that the weights $\beta_k = (\frac{c}{c+k})^p$ be a little skewed toward more recent values; in practice, $p = 1$ works optimally, but in analysis we require $0 < p < 1$.
The convergence proof then expands a gap-like residual term as follows:
\[
\bar\bg_k := \nabla f(\bx_k)^T(\bar \bs_k-\bx_k) \leq- \beta_k  \gap(\bx_k) + (1-\beta_k)(1-\gamma_{k-1}) \bar \bg_{k-1} +  O(\gamma_k)
\]
and from there, form a recursion on the objective error. Unfortunately, that without relieving the $O(\gamma_k^2)$ constant term, this rate cannot possibly be better than that of the vanilla FW. This does contradict our numerical results, however, which show a global accelerated rate as well, suggesting that this rate could be tightened with the right assumptions. However, at this point our goal is to be fully general, so that the rates can then be used to bound manifold identification and discretization error decay.
%; \blue{indeed, though our numerical experiments are successful, the convergence rate derived is worse.} \red{disagree?}

\begin{theorem}[Global rates] Take $0\leq p \leq 1$.
\begin{itemize}\itemsep 0pt
    \item For $\beta(t) = \left(\frac{c}{c+t}\right)^p$, the flow \AvgflowFW~satisfies
$f(x(t)) \leq O\left(\frac{1}{t^{1-p}}\right).$
\item For $\beta_k = \left(\frac{c}{c+k}\right)^p$, the method \AvgFW~satisfies
$f(\bx_{k}) \leq O\left(\frac{1}{k^p}\right).$
\end{itemize}

\end{theorem}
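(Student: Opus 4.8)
The plan is to reduce both statements to a coupled pair of one-dimensional relations — a differential inequality for the flow, a difference inequality for the method — in the objective residual and a gap-like surrogate, and then to decouple them through a single scalar Lyapunov quantity. For the flow, write $h(t) = f(x(t)) - f^*$ and introduce $\bar g(t) = \nabla f(x(t))^T(\bar s(t) - x(t))$, the continuous analogue of $\bar\bg_k$. Differentiating along the flow gives $\dot h = \gamma \bar g$; differentiating $\bar g$, using $\dot{\bar s} = \beta(s-\bar s)$, bounding the Hessian term $\dot x^T\nabla^2 f(x)(\bar s - x) = O(\gamma)$ by $L$-smoothness and $\|\bar s - x\|\le 2D$, and invoking the LMO/convexity inequality $\nabla f(x)^T(s-x)\le -h$, yields
\[
\dot{\bar g} \;\leq\; -\beta h - (\beta + \gamma)\,\bar g + O(\gamma),
\]
which is the exact flow counterpart of the supplied recursion for $\bar\bg_k$.

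The decoupling step is the heart of the argument. I would define the defect $w := \bar g + h$, measuring how far the averaged direction is from its quasi-static value $\bar g \approx -h$. Adding the two relations, the $\gamma\bar g$ terms cancel and one is left with the clean scalar inequality $\dot w \leq -\beta w + O(\gamma)$. Since $\int_0^t \beta\,d\tau \sim \tfrac{c^p}{1-p}\,t^{1-p}$ grows faster than any logarithm, the integrating factor $\exp(\int\beta)$ dominates, and a Laplace-type estimate of $e^{-\int\beta}\!\int \gamma\,e^{\int\beta}$ gives $w(t) = O(\gamma/\beta) = O(1/t^{1-p})$. Substituting $\bar g = w - h$ back into $\dot h = \gamma\bar g$ produces $\dot h + \gamma h = \gamma w = O(t^{p-2})$; integrating with the factor $(c+t)^c$ (legitimate because $c > 1-p$) then delivers the claimed $h(t) = O(1/t^{1-p})$.

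For the method I would run the same program discretely. The $L$-smoothness descent lemma gives $\bh_{k+1} \leq \bh_k + \gamma_k\bar\bg_k + O(\gamma_k^2)$, and the supplied recursion for $\bar\bg_k$ plays the role of the $\dot{\bar g}$ inequality, its $O(\gamma_k)$ remainder arising from the gradient change $(\nabla f(\bx_k)-\nabla f(\bx_{k-1}))^T(\bar\bs_{k-1}-\bx_{k-1})$. Forming the discrete defect $\mathbf{w}_k := \bar\bg_k + \bh_k$ and substituting the one-step update $\bh_k \le (1-\gamma_{k-1})\bh_{k-1}+\gamma_{k-1}\mathbf{w}_{k-1}+O(\gamma_{k-1}^2)$, the $\bh_{k-1}$ contributions cancel exactly, leaving $\mathbf{w}_k \leq (1-\beta_k)\mathbf{w}_{k-1} + O(\gamma_{k-1})$. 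Solving this linear recursion — whose homogeneous factor $\prod_j(1-\beta_j)$ decays super-polynomially — bounds $\mathbf{w}_k$, and feeding the bound into $\bh_{k+1} \leq (1-\gamma_k)\bh_k + \gamma_k\mathbf{w}_k + O(\gamma_k^2)$ reduces everything to a standard scalar recursion of the form $\bh_{k+1} \leq (1-\tfrac{c}{k})\bh_k + (\text{polynomially decaying forcing})$, whose solution is the stated power of $1/k$.

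The main obstacle is that $\bar g$ (resp. $\bar\bg_k$) is \emph{sign-indefinite}: because $\bar s$ is an average rather than the LMO, the textbook inequality $\gap(\bx)\ge h$ that drives ordinary Frank--Wolfe descent is unavailable, so no monotone one-variable argument exists and the two quantities must be tracked simultaneously — the defect substitution is precisely what restores a monotone scalar inequality. For the \emph{discrete} rate the delicate point is the size of the discretization remainder in the gap recursion: a crude $O(\gamma_k)$ bound and a sharper bound exploiting the decay of the averaged discretization term $\|\bar\bs_k-\bx_k\|$ (the very quantity averaging is designed to shrink) feed into $\mathbf{w}_k$ differently, and this feedback is what ultimately selects the exponent. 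Pinning it down amounts to careful bookkeeping of that remainder, together with checking that the threshold $c \geq 3p/2+1$ keeps every recursion in its forcing-dominated regime.
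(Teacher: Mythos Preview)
Your defect substitution $w=\bar g+h$ is a genuinely different route from the paper's, and for the \emph{flow} it is cleaner: the paper instead integrates the $\dot g$ inequality directly with the product integrating factor $\exp(\alpha(t))(c+t)^c$, carries the full history $\int_0^t(\cdot)\,\gap(x(\tau))\,d\tau$, and then invokes a separate lemma (Lemma~\ref{lem:continuous_energy}) to estimate that integral before testing the candidate $h(t)=C/(c+t)^{1-p}$. Your reduction to $\dot w\le -\beta w+O(\gamma)$ bypasses that machinery and lands on the same $O(1/t^{1-p})$.

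For the \emph{method}, however, the defect trick does not deliver the stated $O(1/k^{p})$. From $\mathbf w_k\le(1-\beta_k)\mathbf w_{k-1}+O(\gamma_{k-1})$ a Laplace estimate gives only $\mathbf w_k=O(\gamma_k/\beta_k)=O(k^{p-1})$, and feeding this into $\bh_{k+1}\le(1-\gamma_k)\bh_k+\gamma_k\mathbf w_k+O(\gamma_k^2)$ yields a forcing of order $k^{p-2}$ and hence $\bh_k=O(1/k^{1-p})$, not $O(1/k^p)$; for $p>1/2$ this is strictly weaker than the claim. The loss occurs precisely at the step where you add the two inequalities: the $\bg$ recursion has damping $(1-\beta_k)(1-\gamma_{k-1})$, but in forming $\mathbf w_k$ the $(1-\gamma_{k-1})$ factor cancels against the $\gamma_{k-1}\bar\bg_{k-1}$ coming from the descent lemma, leaving only the $\beta$ contraction on $\mathbf w$. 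The paper's proof does \emph{not} collapse to a single defect: it fully unrolls $\bg_k$ keeping the product $\prod\mu_j=(1-\beta_j)(1-\gamma_{j-1})$ intact (Lemma~\ref{lem:discrete_energy}), so that the accumulated constant forcing is $\sum_i\gamma_{i-1}\prod_{j>i}\mu_j=O(1/k^p)$, and then verifies the candidate $\bh_k=C_2/(k+c)^p$ against the entire gap history $\sum_i\beta_{i,i}(\tfrac{i+c}{k+c})^c\gap(\bx_i)$. This is exactly the mechanism the paper flags in the remark after the theorem (``the method \ldots allows the term $\beta_k\gap(\bx_k)$ to take the weight of an entire step''), and it is what your defect combination gives away.

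Finally, your last paragraph mixes in ingredients that belong to the \emph{local} analysis: the decay of $\|\bar\bs_k-\bx_k\|$ and the threshold $c\ge 3p/2+1$ play no role in the global rate, whose proof uses only the crude bound $\|\bar\bs_k-\bx_k\|\le 2D$. Invoking the averaged-discretization decay here would be circular, since that decay is itself derived from the global rate.
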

The proofs are in Appendix \ref{app:sec:global}. Note that the rates are not exactly reciprocal. In terms of analysis, the method is that  allows the term $\beta_k\gap(\bx_k)$ to take the weight of an entire step, whereas in the flow, the corresponding term is infinitesimally small. This is a curious disadvantage of the flow analysis; since most of our progress is accumulated in the last step, the method exploits this more readily than the flow, where the step is infinitesimally small.

\subsection{Averaging decays discretization term}
Now that we have established the convergence rate of the method, we can now use it to bound the decay of the discretization term. 
There are two possible pitfalls here. First, note that, though $x(t)$ can be viewed as an  average of $s(t)$, the fact that $\|\dot x\|_2\to 0$ is not sufficient to suggest that $\bar s(t)$ ever stops moving (as is evidenced in the vanilla \FW~method, where $x(t)$ averages $s(t)$ which may oscillate forever). Conversely, the fact that $\bar s(t)$ is an average in itself is also not sufficient; the averaging rate of $O(1/k)$ is not summable, so though $\|\dot {\bar s}\|_2\to 0$, $\bar s$ may still never converge. (e.g., $\bar s(t) = \log(t)$.) 

In fact, we need both properties working in tandem. Intuitively, the fact that $x(t)$ indeed \emph{does} stop moving, halting at some $x^*$, means that if $\bar s(t)$ indeed keeps moving forever, it cannot do so in a divergent path, but must keep orbiting  $x^*$. Combined with the averaging will force $s(t)\to x^*$. \footnote{Intuitively, this is similar to the concept that   the alternating sequence $(-1)^k/k$ is summable but $1/k$ is not.} We prove this precisely in Appendix \ref{app:sec:averaging}; here, we provide a ``proof by picture'' 
(Figure \ref{fig:diamond_collapse}) when $\bs_k$ follows some prespecified trajectory (unrelated to any optimization problem). In one case, $\bx_k$ becomes stationary, and $\bar \bs_k\to \bx_k$; in the other case, $\bx_k$ keeps moving forever, suggesting  the averages never have the chance to catch up.   
    \begin{figure}
        \centering
\includegraphics[width=.475\textwidth,trim={1cm 0 1cm 0},clip]{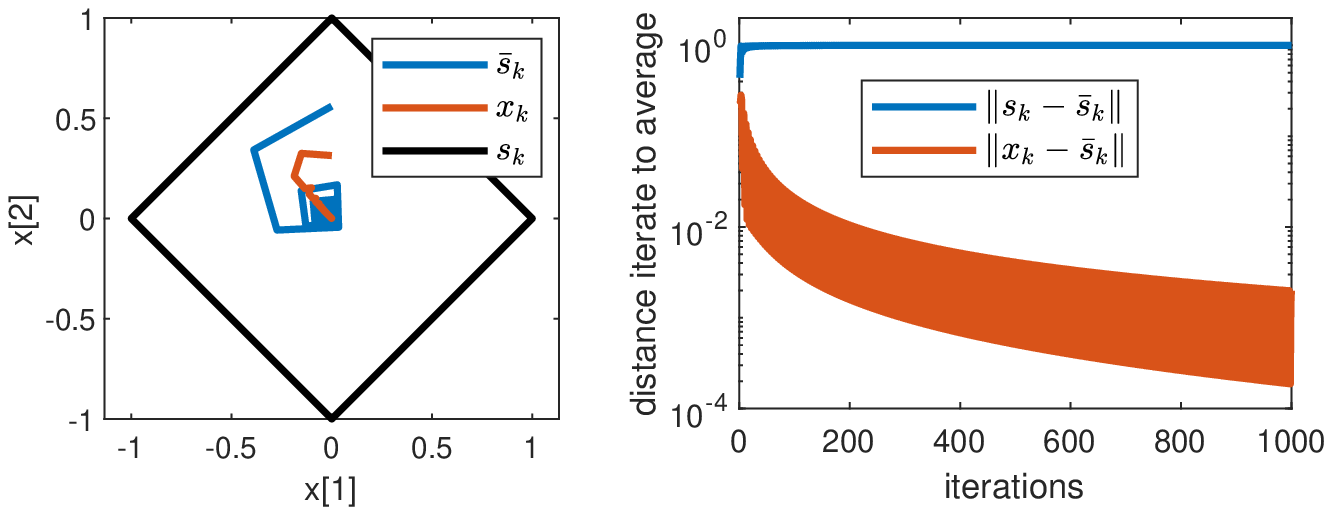}
\includegraphics[width=.475\textwidth,trim={1cm 0 1cm 0},clip]{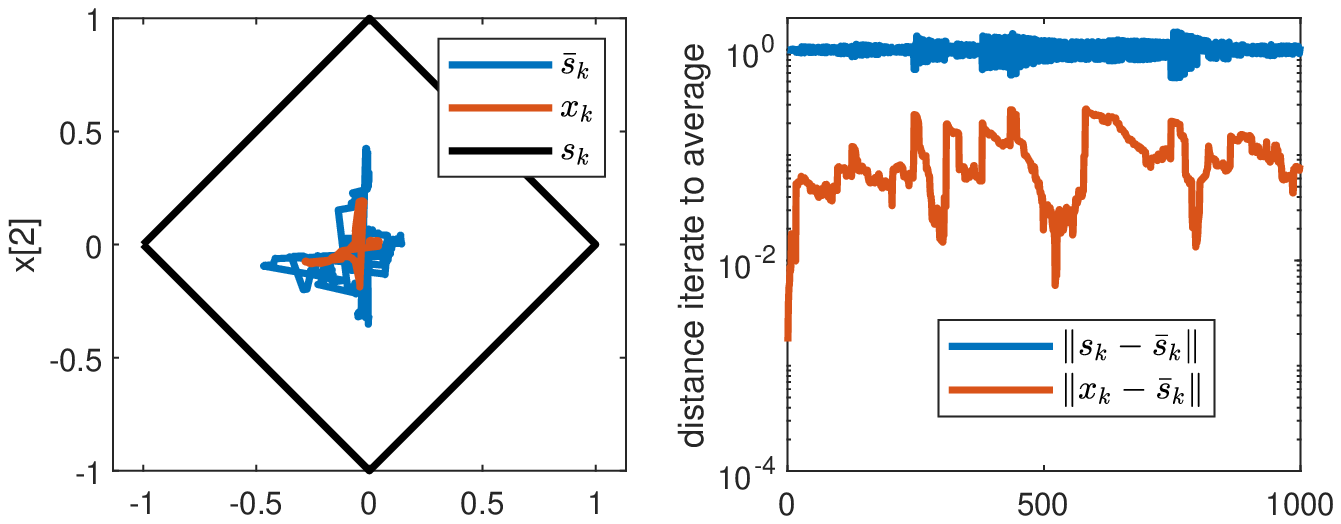}
        \caption{\textbf{Two examples over $\ell_1$ norm ball.}  \textit{Left:} When oscillation is a repeating sequence, the distance $\|\bx_k-\bar\bs_k\|_2$  decays at a $O(1/k)$ rate. \textit{Right:} When oscillation is random, this term does not decay in general.}
        \label{fig:diamond_collapse}
    \end{figure}

\subsection{Manifold identification}
Now that we know that the method converges and the discretization term collapses, we have an accelerated local convergence rate. Specifically, we consider  Case II: $\bx^*$ is on the boundary of $\mD$, on a low-dimensional facet, and is a combination of multiple (but not all) extremal vertices of $\mD$. Specifically, in this case, $\lmo_\mD(\bx^*)$ may return multiple values. This is the usual case in sparse optimization. In this case, we may  distinguish between two phases of the trajectory: first identifying the sparse manifold (global convergence), and then iterating after the sparse manifold has been identified (local convergence). 

A manifold identification \emph{rate} is expressed as $\bar k$, where for any $k > \bar k$, any $\lmo_\mD(\bx_k)$ is also an $\lmo_\mD(\bx^*)$. 
As an example, when $\mD$ is the one-norm ball, then at optimality, $\bx^*_i > 0$ only if $|\nabla f(\bx^*)_i| = \|\nabla f(\bx^*)\|_\infty$. We define a \emph{degeneracy parameter} $\delta$ as 
\[
\delta = \min_{j:\bx^*_j=0} \|\nabla f(\bx^*)\|_\infty - |\nabla f(\bx^*)_j|. 
\]
If $f(\bx_k)-f(\bx^*) \leq \delta/(Ln)$, then 
\[
\|\nabla f(x)-\nabla f(x^*)\|_\infty \leq n\|\nabla f(x)-\nabla f(x^*)\|_2\leq \frac{n}{L}(f(\bx_k)-f(\bx^*) ) \leq \delta/2.
\]
 Then, any LMO of $x$ must necessarily be a maximizer of $\bx^*$. Therefore, a manifold identification upper bound is $\bar k$ where for all $k > \bar k$, $f(\bx_k)-f(\bx^*) < \delta/(Ln)$; that is, the rate is the same as the global convergence rate, scaled by problem-dependent parameters.

This simple idea can be extended to any polyhedral set $\mD$, where the degeneracy parameter $\delta$ will depend on the $L$-smoothness of $f$ with respect to the gauge function of $\mD$ \citep{freund1987dual,friedlander2014gauge}. 
Concretely, we define $\mS(x)$ (the support of $x$ w.r.t. $\mD$) as the set of vertices in $\mD$ that may be retured by the LMO; that is,
\[
\mS(x) := \{s : s^T\nabla f(x) = \min_{s'\in \mD} {s'}^T\nabla f(x)\}
\]
and we say that the manifold has been identified at $\bar k$ if for all $k > \bar k$, $\lmo(\bs_k)\in \mS(\bx^*)$. A similar $\delta$ can be computed in this case. The point is, in all these cases, the manifold identification rate is simply a $1/\delta$ scaling of the method convergence rate. (See also \cite{sun2019we}.)
\footnote{Note that if $\mD$ is a strongly convex set, then if $\bx^*$ is on the boundary (Case I), then $\mS(\bx^*)$ is a singleton and moreover $\delta = 0$ and  manifold identification cannot happen in finite time. For this reason, this is not a case that benefits from this analysis; we really are only considering polyhedral $\mD$. Note that in Case III, manifold identification is simply identified from the very start.}

\subsection{Accelerated local convergence}

To see how averaging affects the convergence rate, first let us consider the flow of AvgFW:
\begin{eqnarray*}
\frac{\partial}{\partial t} f(x(t)) &=& \gamma(t) \nabla f(x(t))^T(\bar s(t)-x(t))\\
&=& \gamma(t) \underbrace{\nabla f(x(t))^T(\bar s(t)-\hat s(t))}_{A} + \gamma(t)\underbrace{\nabla f(x(t))^T(\hat s(t) - x(t))}_{B}
\end{eqnarray*}
where we carefully pick $\hat s(t)$ as follows:
\[
\dot{\hat s}(t) = \beta(t)(\tilde s(t) - \hat s(t)), \quad \tilde s(t) = \argmin{s\in \mS(x^*)}\|s(t)-s\|_2.
\]
In other words, once $\mS(x(t)) = \mS(x^*)$ (manifold identified), then $\tilde s(t) = s(t)$ for all subsequent $t$.
 Using this last choice of $\hat s(t)$, we are able to ensure $A$ decaying via averaging, and and $B = -\gap(x(t))$ after manifold identification.
\begin{theorem}[Local rate]
After manifold identification, the flow AvgFWFlow satisfies
\[
f(x(t))-f(x^*) \leq \frac{\log(t)}{t^c}.
\]
\end{theorem}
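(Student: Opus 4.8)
The plan is to control $h(t) := f(x(t)) - f(x^*)$ through the decomposition already displayed, $\dot h = \gamma(t)(A+B)$, by arguing that $B$ supplies the driving dissipation $-\gamma(t)h$ while $A$ is a vanishing perturbation whose rate is set by the averaging dynamics. Throughout I work on the interval $t \geq \bar t$ on which the manifold has been identified, so that $s(t) = \lmo_\mD(x(t)) \in \mS(x^*)$ and hence $\tilde s(t) = s(t)$.

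First I would handle $B = \nabla f(x)^T(\hat s - x)$. Since $\hat s$ is a running average of $\tilde s(\tau) \in \mS(x^*)$, it remains in the optimal face $F := \conv(\mS(x^*))$. Splitting $B = \nabla f(x)^T(\hat s - x^*) + \nabla f(x)^T(x^* - x)$ and using convexity on the second term, $\nabla f(x)^T(x^* - x) \leq f(x^*) - f(x) = -h$, the target $B = -\gap(x) \leq -h$ reduces to showing the residual $\nabla f(x)^T(\hat s - x^*)$ is negligible. Because $\nabla f(x^*)$ is constant on $F$ and $\hat s, x^* \in F$, this residual equals $(\nabla f(x) - \nabla f(x^*))^T(\hat s - x^*)$, which I would bound by $\|\nabla f(x)-\nabla f(x^*)\|\,\|\hat s - x^*\|$; here $L$-smoothness with $\mu$-strong convexity give the first factor as $O(\sqrt h)$ (this is precisely where strong convexity enters the local rate), while the averaging/discretization-decay result gives $\|\hat s - x^*\| \to 0$.

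Next, for $A = \nabla f(x)^T(\bar s - \hat s)$ I would track $w(t) := \bar s(t) - \hat s(t)$. Subtracting the two averaging ODEs gives $\dot w = \beta(t)\big((s - \tilde s) - w\big)$, which after identification ($s = \tilde s$) collapses to $\dot w = -\beta(t)\,w$. Integrating yields $\|w(t)\| \leq \|w(\bar t)\|\,\exp\!\big(-\int_{\bar t}^t \beta(\tau)\,d\tau\big)$; for $\beta(t) = (c/(c+t))^p$ this is $O(1/t^c)$ at $p=1$ and super-polynomially small for $p<1$. Cauchy--Schwarz and boundedness of $\nabla f$ on $\mD$ then give $|A| \leq G\|w(t)\|$.

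Combining the two estimates produces the scalar inequality $\dot h \leq -\gamma(t)\,h + \gamma(t)\,G\,\|w(t)\|$, which I solve with the integrating factor $\mu(t) = \exp(\int \gamma) = \Theta(t^c)$: since $\mu(t)\gamma(t)\|w(t)\| = \Theta(t^{c-1})\cdot O(t^{-c}) = O(1/t)$, integration introduces exactly one logarithm, $\mu(t)h(t) \leq \mu(\bar t)h(\bar t) + O(\log t)$, i.e. $h(t) = O(\log t / t^c)$. The main obstacle is the interface in Step 1: rigorously justifying $B = -\gap(x)$, since $\hat s$ averages LMOs taken at different iterates, each optimal only for its own gradient rather than for the current $\nabla f(x(t))$. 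Making the residual $(\nabla f(x)-\nabla f(x^*))^T(\hat s - x^*)$ genuinely lower order than $h$ requires feeding in both the strong-convexity rate for $\nabla f(x)\to \nabla f(x^*)$ and the averaging result for $\hat s \to x^*$, and verifying their product beats $h$. Finally, the $\log t$ factor is an artifact of the exact resonance at $p=1$ between the $t^{-c}$ decay of $\|w\|$ and the $t^{c}$ growth of $\mu$; for $p<1$ the same argument gives the stronger $O(1/t^c)$, still within the stated bound.
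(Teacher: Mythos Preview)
Your overall architecture matches the paper: same $A/B$ decomposition, same integrating-factor ODE at the end, and the $\log t$ arises for exactly the reason you give (resonance between $\|w\|=O(t^{-c})$ and $\mu(t)=\Theta(t^c)$). Your treatment of $A$ via the subtracted ODE $\dot w=\beta((s-\tilde s)-w)$, which collapses to $\dot w=-\beta w$ after identification, is in fact tidier than the paper's route, which expands the integral representation of $\bar s-\hat s$ and bounds the pre-identification and post-identification pieces separately; both yield the same $O((c+t)^{-c})$.

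Where you diverge from the paper is in Step~1, and what you flag as ``the main obstacle'' is not one. After manifold identification, $\mS(x(t))=\mS(x^*)$ by definition; this says precisely that \emph{every} point of $\mS(x^*)$ minimizes the linear functional $s\mapsto\nabla f(x(t))^T s$ over $\mD$. Since $\hat s(t)\in\conv(\mS(x^*))$ by construction (each $\tilde s(\tau)$ lies there), linearity gives
\[
\nabla f(x(t))^T\hat s(t)\;=\;\min_{s'\in\mD}\nabla f(x(t))^T s'\;=\;\nabla f(x(t))^T s(t),
\]
so $B=\nabla f(x(t))^T(\hat s(t)-x(t))=-\gap(x(t))\le -h(t)$ \emph{exactly}, with no residual term. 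In particular the paper does not use strong convexity anywhere in the flow argument; the assertion that ``this is precisely where strong convexity enters the local rate'' is incorrect for the flow (it is only used later for the discrete method, through the discretization-decay corollary). Your residual route would also be fragile as written: bounding $(\nabla f(x)-\nabla f(x^*))^T(\hat s-x^*)$ by $O(\sqrt h)\,\|\hat s-x^*\|$ requires $\|\hat s-x^*\|\to 0$, but nothing forces $\hat s$ to converge to $x^*$ rather than some other point of the optimal face, so that factor need not decay. Once you replace your Step~1 by the one-line observation above, your proof goes through and coincides with the paper's.
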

The proof is in appendix \ref{app:sec:local}.

Now let us extrapolate to the method. 
From $L$-smoothness of $f$, we have the difference inequality
\[
f(\bx_{k+1})-f(\bx_k) \leq \gamma_k\underbrace{\nabla f(\bx_k)^T(\bar \bs_k-\hat \bs_k)}_{A} +\gamma_k\underbrace{\nabla f(\bx_k)^T(\hat \bs_k-\bx_k)}_{B} + \frac{\gamma_k^2L}{2}\underbrace{\|\bar \bs_k-\bx_k\|_2^2}_{C}.
\]
The same tricks work on terms $A$ and $B$; however, as in the vanilla case, the rate is throttled by $f(\bx_{k+1})-f(\bx_k) \geq O(C)$. We now combine in our averaging result, which, mixed with the global convergence rate (which has no assumptions in discretization decay) shows that  $\|\bs_k-\bx_k\|^2_2 = O(1/k^{3p/2-1})$. This gives our final convergence result.

\begin{theorem}[Local rate]
Assume that $f$ is $\mu$-strongly convex, and pick $c \geq 3p/2+1$. After manifold identification, the method AvgFW satisfies
\[
f(\bx_k)-f(\bx^*) = O(1/k^{3p/2}).
\]
\end{theorem}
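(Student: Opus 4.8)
The plan is to start from the $L$-smoothness difference inequality already displayed in the text,
\[
f(\bx_{k+1})-f(\bx_k) \leq \gamma_k \underbrace{\nabla f(\bx_k)^T(\bar \bs_k-\hat \bs_k)}_{A} + \gamma_k\underbrace{\nabla f(\bx_k)^T(\hat \bs_k-\bx_k)}_{B} + \frac{\gamma_k^2 L}{2}\underbrace{\|\bar \bs_k-\bx_k\|_2^2}_{C},
\]
and to bound $A$, $B$, $C$ separately so that the whole expression collapses into a scalar recursion of the form $\bh_{k+1}\leq (1-\gamma_k)\bh_k + a_k$ with a fast-decaying forcing term $a_k$, which I then solve by induction.

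First I would dispose of the two ``easy'' terms. For $A$, note that after manifold identification $\tilde\bs_j=\bs_j$, so $\bar\bs_k$ and $\hat\bs_k$ obey the \emph{same} averaging recursion and differ only through their values at $\bar k$; subtracting the two recursions gives $\bar\bs_k-\hat\bs_k=(1-\beta_k)(\bar\bs_{k-1}-\hat\bs_{k-1})$, whence $\|\bar\bs_k-\hat\bs_k\|_2 = O\big(\prod_{j=\bar k+1}^k(1-\beta_j)\big)$. Since $\sum_j\beta_j=\sum_j(\tfrac{c}{c+j})^p$ diverges for $p<1$, this product decays faster than any polynomial, so $\gamma_k A$ is negligible against the remaining terms. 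For $C$, I would simply invoke the averaging/discretization-decay result combined with the global rate $\bh_k=O(1/k^p)$, which the text states yields $\|\bar\bs_k-\bx_k\|_2^2=O(1/k^{3p/2-1})$; multiplying by $\gamma_k^2=O(1/k^2)$ shows the discretization contribution is $\tfrac{\gamma_k^2 L}{2}C=O(1/k^{3p/2+1})$, and this will be the dominant forcing term.

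The crux is term $B$, and this is where I expect the real difficulty. The goal is to show $B\leq -\bh_k+(\text{error})$ with an error dominated by the $O(1/k^{3p/2+1})$ forcing once multiplied by $\gamma_k$. Writing $B=\nabla f(\bx_k)^T(\bx^*-\bx_k)+\nabla f(\bx_k)^T(\hat\bs_k-\bx^*)$, convexity bounds the first piece by $-\bh_k$. For the second piece I would use manifold identification: both $\hat\bs_k$ (a convex average of atoms in $\mS(\bx^*)$) and $\bx^*$ lie in the face $\conv(\mS(\bx^*))$, on which $\nabla f(\bx^*)$ is constant-normal, so $\nabla f(\bx^*)^T(\hat\bs_k-\bx^*)=0$ and hence $\nabla f(\bx_k)^T(\hat\bs_k-\bx^*)=(\nabla f(\bx_k)-\nabla f(\bx^*))^T(\hat\bs_k-\bx^*)\leq L\|\bx_k-\bx^*\|_2\,\|\hat\bs_k-\bx^*\|_2$. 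Strong convexity then converts $\|\bx_k-\bx^*\|_2$ into $O(\sqrt{\bh_k})$, while $\|\hat\bs_k-\bx^*\|_2\leq \|\hat\bs_k-\bar\bs_k\|_2+\|\bar\bs_k-\bx_k\|_2+\|\bx_k-\bx^*\|_2$ decays because all three pieces collapse (the first super-polynomially by term $A$, the second as $\sqrt{C}$, the third as $\sqrt{\bh_k}$). The danger is that the naive product $\sqrt{\bh_k}\,\|\hat\bs_k-\bx^*\|_2$ contains a term of order $\bh_k$ carrying a condition-number-sized constant $L/\mu$, which could swamp the $-\bh_k$ descent; I would neutralize this by retaining the negative monotonicity term $-\mu\|\bx_k-\bx^*\|_2^2$ that strong convexity also supplies and splitting the cross term by Young's inequality, so that only the genuinely decaying piece $\tfrac{L^2}{2\mu}\|\hat\bs_k-\bx^*\|_2^2$ survives (of order $C$ plus super-polynomially small terms). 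This is precisely the step that forces the strong-convexity hypothesis and is, I expect, the main obstacle; a bootstrapping argument (feeding the global rate $O(1/k^p)$ back in and re-solving) may be needed to certify that the surviving error decays at least as fast as the $O(1/k^{3p/2+1})$ discretization forcing.

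Finally I would assemble the pieces into $\bh_{k+1}\leq(1-\gamma_k)\bh_k+O(1/k^{3p/2+1})$ with $\gamma_k=\tfrac{c}{c+k}$, and solve it by the standard induction: posit $\bh_k\leq M/(c+k)^{3p/2}$ and verify the inductive step, which reduces to an inequality of the form $A'\leq M(c-3p/2)$ on the forcing constant $A'$. This is solvable precisely because the homogeneous decay exponent $c$ exceeds the target exponent $3p/2$; the hypothesis $c\geq 3p/2+1$ supplies a comfortable margin and fixes $M$. This yields $\bh_k=O(1/k^{3p/2})$ and completes the proof.
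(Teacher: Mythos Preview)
Your handling of terms $A$ and $C$ matches the paper's: $A$ decays super-polynomially because $\bar\bs_k-\hat\bs_k$ obeys the homogeneous averaging recursion after $\bar k$, and $C$ contributes the dominant forcing $\gamma_k^2 L\,C/2=O(1/k^{3p/2+1})$ via the averaging--discretization result. The final induction on the scalar recursion $\bh_{k+1}\le(1-\gamma_k)\bh_k+a_k$ is also the same.

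The genuine gap is in term $B$. After Young's inequality you are left with $B\le -\bh_k+\tfrac{L^2}{2\mu}\|\hat\bs_k-\bx^*\|_2^2$, and you correctly note that $\|\hat\bs_k-\bx^*\|_2^2=O(C)=O(1/k^{3p/2-1})$. But then $\gamma_k B$ contributes a forcing of order $\gamma_k\cdot C=O(1/k^{3p/2})$, which is one full order \emph{larger} than the $\gamma_k^2\cdot C=O(1/k^{3p/2+1})$ coming from the discretization term. Feeding this into $\bh_{k+1}\le(1-\gamma_k)\bh_k+O(1/k^{3p/2})$ yields only $\bh_k=O(1/k^{3p/2-1})$; since $3p/2-1<p$ for $p<1$, this is actually \emph{weaker} than the global rate, so your local analysis gives no improvement. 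Bootstrapping does not rescue this: the iteration $q\mapsto q/2+p-1$ (the exponent of $C$ induced by $\bh_k=O(1/k^q)$) has fixed point $2p-2<0$, and starting from $q_0=p$ already moves in the wrong direction.

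The paper sidesteps this entirely by using a stronger consequence of manifold identification: once $\mS(\bx_k)=\mS(\bx^*)$, every vertex of $\mS(\bx^*)$ is itself an LMO at $\bx_k$, so $\nabla f(\bx_k)^T\tilde\bs=\nabla f(\bx_k)^T\bs_k$ for each $\tilde\bs\in\mS(\bx^*)$. Since $\hat\bs_k$ is a convex combination of such points, $\nabla f(\bx_k)^T(\hat\bs_k-\bx_k)=\nabla f(\bx_k)^T(\bs_k-\bx_k)=-\gap(\bx_k)\le-\bh_k$ \emph{exactly}, with no residual term. Your detour through $\bx^*$ and the identity $\nabla f(\bx^*)^T(\hat\bs_k-\bx^*)=0$ is correct but throws away precisely this structure: the point is that $\nabla f(\bx_k)^T$---not just $\nabla f(\bx^*)^T$---is already constant on the optimal face once the manifold is identified.
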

The proof is in appendix \ref{app:sec:local}.
Although the proof requires $p<1$, in practice, we often use $p = 1$ and observe about a $O(1/k^{3/2})$ rate, regardless of strong convexity.

\section{Numerical Experiments}

\subsection{Simulated Compressed Sensing}
We minimize a quadratic function with a $\ell_1$- norm ball constraint. Given $\bx_0\in \R^{m}$, a sparse ground truth vector with 10\% nonzeros, and given $A\in \R^{n\times m}$ with entries i.i.d. Gaussian, we generate $\by = A\bx_0  + \bz$ where $\bz_i\sim \mN(0,0.05)$. Then we solve 
\begin{equation}
\min_{x\in \R^n} \quad  \tfrac{1}{2}\|Ax-y\|_2^2\qquad
\mathrm{subject~to} \quad  \|x\|_1\leq\alpha.
\end{equation}
Figure \ref{fig:quadratic} evaluates the performance of \FW~and  \AvgFW, for a problem with with  $m = 100$, $n = 500$, and varying values of $\alpha$. Note that this problem is \emph{not} strongly convex. In all cases, we see an improvement in the duality gap (which upper bounds the objective suboptimality $f(\bx)-f(\bx^*)$) improve its convergence rate from $O(1/k)$ to approaching $O(1/k^{3/2})$, and in all cases the discretization rate of \AvgFW~is comparable to that of the gap. The support size itself varies based on $\alpha$, and when $\alpha$ is neither very big nor very small, seems to decay slowly; however, the final sparsity set and the initial working set (number of nonzeros touched from that point on) are very similar, suggesting that though the optimal manifold was not found exactly, it was well-approximated. 
\begin{figure}[ht!]
\centering
\includegraphics[width=\textwidth]{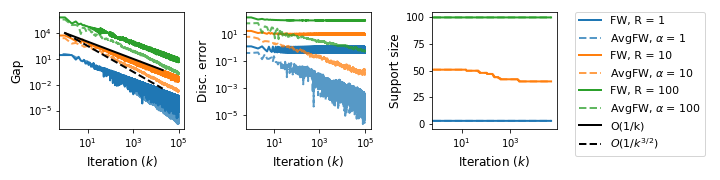}
        \caption{\textbf{Compressed sensing.}  \textit{Left:} Gap.  \textit{Center:} Discretization error,  $\|\bs_k-\bx_k\|_2$ for FW and $\|\bar\bs_k- \bx_k\|_2$ for AvgFW. \textit{Right:} Support size, which is the number of unique indices from current iteration till the end. }
        \label{fig:quadratic}
\end{figure}

\subsection{Sparse Logistic Regression}
We now evaluate our proposed \AvgFW~method on two binary classification tasks. We minimize
\begin{equation}
    \min_{x\in \R^n} \;  \frac{1}{m}\sum_{i=1}^{m}\log(1+\exp(-y_iz_i^Tx))\quad
    \mathrm{s.t.} \;  \|x\|_1\leq\alpha
\end{equation}
for two real world datasets \citep{guyon2004result}.
\begin{itemize}
\item \textbf{Low sparsity problem (Fig \ref{fig:gisette}).}
The Gisette task is to recognize  handwritten digits; to use binary classification, we only diffentiate between  4 and 9. The dataset is fully dense, with $n = 5000$ features, so the entry in Gisette dataset is dense. We use a 60/30 train/validation split over  $m=2000$ data samples, and we set $\alpha = 10$.

\item \textbf{High sparsity problem (Fig \ref{fig:dorothea}).}
We consider the task of predicting which chemical compounds bind to Thrombin from the Dorothea drug discovery dataset.  The dataset is sparse, with  about 0.91\% nonzeros, and the labels are unbalanced (9.75\% +1, 90.25\% -1). We use the given $m = 800$ training samples and $m = 350$ validation samples, with  $n=100000$ features, and $\alpha=10$. 

\end{itemize}
In both cases, we optimize $\alpha$ over the validation set, sweeping a coarse logarithmic grid of 10 points from 1 to 100. Here, we see similar speedups in the duality gap, discretization error, and support size.

\begin{figure}
\centering
\begin{subfigure}[b]{\textwidth}
\includegraphics[width=\textwidth,trim={3ex 3ex 3ex 2ex},clip]{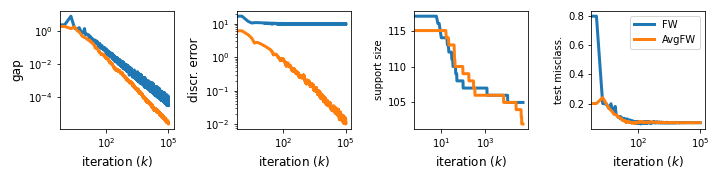}
        \caption{\textbf{Gisette.} }
        % \textit{Left:} Discretization term, 
        % $\|\bs_k-\bx_k\|_2$ for FW and $\|\bar\bs_k- \bx_k\|_2$ for AvgFW and AvgFWRK. \textit{Central:} Gap. \textit{Right:} Support error, which is the number of unique indices from current iteration till the end.}
        \label{fig:gisette}
\end{subfigure}
\begin{subfigure}[b]{\textwidth}
\centering
\includegraphics[width=\textwidth,trim={3ex 3ex 3ex 2ex},clip]{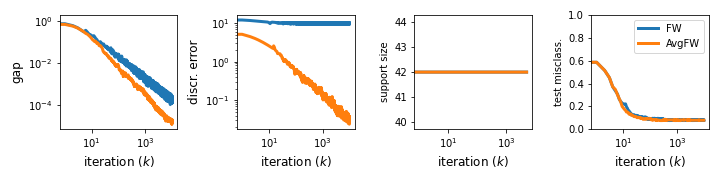}
        \caption{\textbf{Dorothea.} }
        % \textit{Left:} Discretization term, 
        % $\|\bs_k-\bx_k\|_2$ for FW and $\|\bar\bs_k- \bx_k\|_2$ for AvgFW and AvgFWRK. \textit{Central:} Gap. \textit{Right:} Support error, which is the number of unique indices from current iteration till the end. }
        \label{fig:dorothea}
\end{subfigure}
\caption{Performance of \AvgFW~on binary classification over real world datasets.}
\end{figure}

%As shown in the central plots of Figure \ref{fig:gisette},  \ref{fig:dorothea},  \ref{fig:quadratic}, we can observe the benefits of AvgFW and AvgFWRK on three machine learning problems with the step size $\frac{2}{2+k}$. AvgFW allows to go from $O(\frac{1}{k})$ to $\O(\frac{1}{k^{3/2}})$, and AvgFWRK can accelerate faster to $\O(\frac{1}{k^2})$.

\section{Discussion}

The main goal of this study is to see if the discretization error, which seems to be the primary cause of slow \FW~convergence, can be attacked directly using an averaging method, and thereby speed up the convergence rate. This was accomplished; without using away steps we are able to improve the $O(1/k)$ rate to up to $O(1/k^{3/2})$ with negligible computation and memory overhead; moreover, the offending term in the discretization error, which is constant in \FW~when $\mD$ is not strongly convex, is shown here to always decay. A global and local convergence rate is given. 

Our numerical results show that, though our theoretical improvements are only local, the effect of this acceleration appears effective globally; moreover, manifold identification (or at least reduction to a small working set of nonzeros) appears almost immediately in many cases. In all cases, we do note that though averaging improves duality gap, it does not seem to appreciably improve manifold identification or test error performance, suggesting that a fast-and-loose implementation (no averaging) works well for all practical reasons. Still, we motivate that the improvement in the convergence rate is valuable in providing reliability guarantees in downstream applications.

\newpage
\bibliographystyle{plainnat}
\bibliography{main.bbl}
\newpage
\appendix

\section{Accumulation terms}
\label{app:sec:accumulation}
\begin{lemma}
For an averaging term $\bar s(t)$ satisfying
\[
\dot{\bar s}(t) =  \beta(t) (s(t) - \bar s(t)), \qquad \bar s(0) = s(0) = 0
\]
where $\beta(t) = \frac{c^p}{(c+t)^p}$, then
\[
\bar s(t) = 
\begin{cases}
\displaystyle e^{-\alpha(t)}\int_0^t \frac{c^p e^{\alpha(\tau)}}{(c+\tau)^p} s(\tau) d\tau, & p\neq 1\\
\displaystyle \frac{c}{(c + t)^c} \int_0^t  (c + \tau)^{c - 1} s(\tau) d\tau & p = 1
\end{cases}
\]
where $\alpha(t) = \frac{c^p(c+t)^{1-p}}{1-p}$. If $s(t) = 1$ for all $t$, then we have an accumulation term
\[
\bar s(t) = \begin{cases}
\displaystyle  1-\frac{e^{\alpha(0)}}{e^{\alpha(t)}},& p\neq 1\\
 \displaystyle 1-(\frac{c}{c+t})^c, & p = 1\\
 \end{cases}
 \]

\end{lemma}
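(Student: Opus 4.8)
The plan is to treat the defining relation as a scalar first-order linear ODE and solve it by the integrating factor method, then specialize to $s \equiv 1$. Rewriting $\dot{\bar s}(t) = \beta(t)(s(t) - \bar s(t))$ in standard form $\dot{\bar s}(t) + \beta(t)\bar s(t) = \beta(t)s(t)$, the natural integrating factor is $\mu(t) = \exp\left(\int_0^t \beta(\tau)\,d\tau\right)$. The key preliminary computation is the antiderivative of $\beta$: for $p \neq 1$ one has $\int \frac{c^p}{(c+\tau)^p}\,d\tau = \frac{c^p(c+\tau)^{1-p}}{1-p} = \alpha(\tau)$, so $\alpha'(t) = \beta(t)$ and one may take $\mu(t) = e^{\alpha(t)}$; for $p = 1$ instead $\int \frac{c}{c+\tau}\,d\tau = c\log(c+\tau)$, giving $\mu(t) = (c+t)^c$. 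This is exactly why the statement naturally splits into two cases.

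First I would multiply the ODE by $\mu(t)$ to obtain $\frac{d}{dt}\big(\mu(t)\bar s(t)\big) = \mu(t)\beta(t)s(t)$, integrate from $0$ to $t$, and use $\bar s(0) = 0$ to eliminate the boundary term. Dividing through by $\mu(t)$ then yields the two integral representations directly: for $p \neq 1$, $\mu(t)\beta(t) = c^p e^{\alpha(t)}/(c+t)^p$ gives $\bar s(t) = e^{-\alpha(t)}\int_0^t \frac{c^p e^{\alpha(\tau)}}{(c+\tau)^p}s(\tau)\,d\tau$; for $p = 1$, $\mu(t)\beta(t) = c(c+t)^{c-1}$ gives $\bar s(t) = \frac{c}{(c+t)^c}\int_0^t (c+\tau)^{c-1}s(\tau)\,d\tau$.

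For the accumulation term I would set $s \equiv 1$ and evaluate the integrals in closed form. In the $p \neq 1$ case the integrand equals $\frac{d}{d\tau}e^{\alpha(\tau)}$ (again because $\alpha' = \beta$), so the integral telescopes to $e^{\alpha(t)} - e^{\alpha(0)}$, and multiplying by $e^{-\alpha(t)}$ gives $1 - e^{\alpha(0)}/e^{\alpha(t)}$. In the $p = 1$ case $\int_0^t (c+\tau)^{c-1}\,d\tau = \frac{1}{c}\big((c+t)^c - c^c\big)$, and substituting back produces $1 - \left(\frac{c}{c+t}\right)^c$, as claimed.

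The computation is entirely routine, and there is no serious obstacle beyond careful bookkeeping. The only point requiring genuine attention is the degeneracy of the formula at $p = 1$: the antiderivative of $\beta$ transitions from a power law to a logarithm there, which is precisely where the $1/(1-p)$ prefactor in $\alpha(t) = \frac{c^p(c+t)^{1-p}}{1-p}$ becomes singular, forcing the separate treatment. I would also remark that because the ODE is linear and decoupled across coordinates, the vector-valued case follows by applying the scalar argument componentwise, so nothing extra is needed for $\bar s(t) \in \R^n$.
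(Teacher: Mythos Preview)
Your proposal is correct and arrives at exactly the formulas in the lemma. The only difference from the paper is one of presentation: the paper proves the result by \emph{verification}, differentiating the claimed integral representations and checking that they satisfy $\dot{\bar s}=\beta(s-\bar s)$, whereas you \emph{derive} them via the integrating factor. Both rest on the same key identity $\alpha'(t)=\beta(t)$ (and its $p=1$ logarithmic analogue), so the mathematical content is essentially identical; your route has the mild advantage of explaining where the formulas come from rather than requiring them to be guessed in advance.
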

\begin{proof}
This can be done through simple verification.
\begin{itemize}
    \item If $p\neq 1$,
\[
\alpha'(t) = \frac{c^p}{(c+t)^p} = \beta(t), 
\]
and via chain rule,
\[
\bar s'(t) = \underbrace{e^{-\alpha(t)}\frac{c^p\exp(\alpha(t))}{(c+t)^p}}_{=\beta(t)}s(t)-\alpha'(t) \underbrace{\exp(-\alpha(t))\int_0^t \frac{c^p\exp(\alpha(\tau))}{(c+\tau)^p} s(\tau) d\tau}_{\bar s(t)}.
\]
The accumulation term can be verified if 
\[
e^{-\alpha(t)}\int_0^t \frac{c^pe^{\alpha(\tau)}}{(c+\tau)^p} d\tau = 1-\frac{e^{\alpha(0)}}{e^{\alpha(t)}}
\]
which is true since 
\[
e^{-\alpha(t)}\int_0^t \frac{c^pe^{\alpha(\tau)}}{(c+\tau)^p} d\tau =  e^{-\alpha(t)}\int_0^t(\frac{d}{d\tau} e^{\alpha(\tau)})d\tau.
\]
\item If $p = 1$
\[
\bar s'(t) = \frac{c}{(c+t)}s(t) - \frac{c^2}{(c+t)^{c+1}}\int_0^t (c+\tau)^{c-1} s(\tau) d\tau = \frac{c}{(c+t)} (s(t)-\bar s(t)).
\]
For the accumulation term, 
\[
\frac{c}{(c+t)^c}\int_0^t (c+\tau)^{c-1}  d\tau = \frac{c}{(c+t)^c}\int_0^t (\frac{\partial}{\partial \tau} \frac{(c+\tau)^c}{c})  d\tau = 1-(\frac{c}{c+t})^c.
\]
\end{itemize}

\end{proof}
For convenience, we define
\[
\beta_{t,\tau} :=
\begin{cases}
\displaystyle   \frac{c^p e^{\alpha(\tau)-\alpha(t)}}{(c+\tau)^p}   , & p\neq 1\\
\displaystyle \frac{c(c+\tau)^{c-1}}{(c+t)^b}, & p = 1,
\end{cases}
\qquad
\bar\beta_t :=
\begin{cases}
\displaystyle  1-\frac{\exp(\alpha(0))}{\exp(\alpha(t))}, & p\neq 1\\
\displaystyle 1-(\frac{c}{c+t})^b & p = 1
\end{cases}
\]

\begin{lemma}
For the averaging sequence $\bar \bs_k$ defined recursively as 
\[
\bar \bs_{k+1} = \bar \bs_k + \beta_k (\bs_k - \bar \bs_k), \qquad \bar \bs_0 = 0.
\]
Then
\[
\bar \bs_k = \sum_{i=1}^k \beta_{k,i} \bs_i, \qquad
\beta_{k,i} =  \frac{c^p}{(c+i)^p}\prod_{j=0}^{k-i-1} \left(1-\frac{c^p}{(c+k-j)^p}\right) \overset{p=1}{=} \frac{c}{c+i} \prod_{j=0}^{c} \frac{i+j+1}{c+k-j}
\]
and moreover, $\sum_{i=1}^k \beta_{k,i} = 1$.
\end{lemma}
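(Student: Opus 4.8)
The plan is to treat the defining relation as a first-order affine recursion and solve it in closed form, then verify the two auxiliary claims (the $p=1$ simplification and the normalization) separately.

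First I would rewrite the recursion $\bar\bs_{k+1} = \bar\bs_k + \beta_k(\bs_k - \bar\bs_k)$ as the affine map $\bar\bs_{k+1} = (1-\beta_k)\bar\bs_k + \beta_k\bs_k$. This is a standard inhomogeneous linear recursion with multiplier $a_k = 1-\beta_k$ and forcing term $b_k = \beta_k\bs_k$, whose solution from $\bar\bs_0 = 0$ is $\bar\bs_k = \sum_i b_i \prod_{j>i} a_j = \sum_i \beta_i\bs_i \prod_{j>i}(1-\beta_j)$. The cleanest way to make this rigorous is induction on $k$: the base case is immediate, and in the inductive step every existing coefficient is simply multiplied by $(1-\beta_k)$ while the newest atom $\bs_k$ enters with weight $\beta_k$. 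Matching this to the claimed $\beta_{k,i}$ then reduces to substituting $\beta_j = c^p/(c+j)^p$ and reindexing the product: the change of variable $m = k-j$ turns $\prod_{j=0}^{k-i-1}\bigl(1-c^p/(c+k-j)^p\bigr)$ into $\prod_m (1-\beta_m)$ over the appropriate window, which is exactly the product produced by unrolling. The only delicate point here is the off-by-one bookkeeping in the product limits (together with the shift coming from the summation starting at $i=1$), which I would pin down against the base case.

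For the $p=1$ specialization I would substitute $1-\beta_j = 1 - c/(c+j) = j/(c+j)$, so that the product becomes $\prod_m \tfrac{m}{c+m}$, a ratio of rising and falling factorials. Writing these as ratios of factorials (valid when $c$ is a positive integer) collapses the telescoping product into the stated finite product $\prod_{j=0}^{c}\tfrac{i+j+1}{c+k-j}$ of $c+1$ terms; this is the step where integrality of $c$ is genuinely used, and I would verify the collapse by cancelling the common interior factors of the two factorial ratios.

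Finally, the normalization $\sum_i \beta_{k,i} = 1$ follows most cleanly by induction directly from the affine structure rather than from the explicit formula: if $\sum_i \beta_{k,i} = 1$, then since $\bar\bs_{k+1}$ mixes $\bar\bs_k$ with weight $(1-\beta_k)$ and $\bs_k$ with weight $\beta_k$, we get $\sum_i \beta_{k+1,i} = (1-\beta_k)\cdot 1 + \beta_k = 1$. The base case uses the fact that $\beta_0 = (c/c)^p = 1$, so the very first averaging step fully replaces the zero initialization and the weights already sum to one; this is precisely the discrete analogue of the accumulation-term computation in the previous lemma with $s(\tau)\equiv 1$. I expect the main obstacle to be purely the index bookkeeping in matching the unrolled product to the stated closed form and in the $p=1$ factorial telescoping, since the existence of the closed form and the normalization are both routine inductions.
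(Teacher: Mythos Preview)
Your proposal is correct and mirrors the paper's proof almost step for step: the paper also unrolls the affine recursion $\bar\bs_{k+1}=(1-\beta_k)\bar\bs_k+\beta_k\bs_k$ to obtain the product formula, telescopes the $p=1$ case via $1-\tfrac{c}{c+j}=\tfrac{j}{c+j}$, and proves the normalization by the same convex-combination induction (noting $\beta_{k,i}=(1-\beta_k)\beta_{k-1,i}$ for $i<k$ and $\beta_{k,k}=\beta_k$). Your caution about off-by-one bookkeeping is well placed---the paper's own indexing between the recursion and the stated formula is not entirely consistent, so pinning the base case down carefully, as you plan to, is exactly the right move.
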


\begin{proof}

\begin{eqnarray*}
\bar\bs_{k+1} &=& \frac{c^p}{(c+k)^p} \bs_k + \left(1-\frac{c^p}{(c+k)^p}\right)\bar\bs_k\\
&=& \frac{c^p}{(c+k)^p} \bs_k + \frac{c^p}{(c+k-1)^p}\left(1-\frac{c^p}{(c+k)^p}\right) \bs_{k-1} + \left(1-\frac{c^p}{(c+k)^p}\right)\left(1-\frac{c^p}{(c+k-1)^p}\right)\bar\bs_{k-1}\\
&=& \sum_{i=0}^k \frac{c^p}{(c+k-i)^p}\prod_{j=0}^{i-1} \left(1-\frac{c^p}{(c+k-j)^p}\right)\bs_{k-i}\\
&\overset{l = k-i}{=}& \sum_{l=1}^k \underbrace{\frac{c^p}{(c+l)^p}\prod_{j=0}^{k-l-1} \left(1-\frac{c^p}{(c+k-j)^p}\right)}_{\beta_{k,l}}\bs_{l}.
%&=& \sum_{l=1}^k \underbrace{\frac{b}{b+l}\prod_{i=1}^{b} \frac{i+l}{i+k}}_{\beta_{k,l}}\bs_{l}
\end{eqnarray*}
If $p = 1$, then 

\[
\beta_{k,i} = \frac{c}{c+i}\prod_{l=0}^{k-i-1} \frac{k-l}{c+k-l} =\frac{c}{c+i} \frac{k(k-1)(k-2)\cdots (i+1)}{(c+k)(c+k-1)\cdots (c+i+1)}
= \frac{c}{c+i} \prod_{j=0}^{c} \frac{i+j+1}{c+k-j} 
\]

For all $p$, to show the sum is 1, we do so recursively. At $k = 1$, $\beta_{1,1} = \frac{c^p}{(c+1)^p}$. Now, if $\sum_{i=0}^{k-1} \beta_{k-1,i} = 1$, then for $i \leq k-1$
\[
\beta_{k,i} =   \left(1-\frac{c^p}{(c+k)^p}\right)\beta_{k-1,i}, \quad i \leq k-1
\]
and for $i = k$, $\beta_{k,k} = \frac{c^p}{(c+k)^p}$. Then
\[
\sum_{i=1}^k \beta_{k,i} = \beta_{k,k} +  \left(1-\frac{c^p}{(c+k)^p}\right)\sum_{l=1}^{k-1} \beta_{k-1,i} = \frac{c^p}{(c+k)^p} + \left(1-\frac{c^p}{(c+k)^p}\right) = 1.
\]
\end{proof}

\section{Averaging}
\label{app:sec:averaging}

In the vanilla Frank-Wolfe method, we have two players $s$ and $x$, and as $x\to x^*$, $s$ may oscillate around the solution fascet however it would like, so that its average is $x^*$ but $\|s-x^*\|_2$ remains bounded away from 0. However, we now show that if we replace $s$ with $\bar s$, whose velocity slows down, then it must be that $\|s-x^*\|_2$ decays.

\begin{lemma}[Continuous averaging]
Consider some vector trajectory $v(t)\in \mathbb R^n$, and suppose 
\begin{itemize}
    \item $\|v(t)\|_2 \leq D$ for arbitrarily large $t$

\item  $\|v'(t)\|_2 = \beta(t) D$  

\item  $\frac{1}{2}\|\int_t^\infty \gamma(\tau) v(\tau) d\tau\|_2^2 = O(1/t^q)$  for $q > 0$.
\end{itemize}
Then $\|v(t)\|^2_2 \leq O(t^{q/2+p-1})$.

\end{lemma}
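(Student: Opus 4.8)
The plan is to argue by contradiction on the pointwise norm, using the velocity bound to convert a large value of $\|v(t_0)\|_2$ into a large, \emph{non-cancelling} contribution to the weighted tail integral $I(t):=\int_t^\infty \gamma(\tau)v(\tau)\,d\tau$, which the third hypothesis forbids. Note first that since $\gamma(\tau)\sim c/\tau$ and $\|v\|_2\le D$, the integrand $\gamma v$ is not absolutely integrable; the convergence of $I(t)$ and its decay $\|I(t)\|_2=O(t^{-q/2})$ (assumption three) is genuinely a statement about cancellation among the \emph{directions} of $v(\tau)$. The whole point of the velocity bound $\|v'(t)\|_2=\beta(t)D$ is that $v$ cannot reverse direction quickly, so over a short enough window this cancellation cannot happen, and a large $\|v(t_0)\|_2$ is forced to register in $I$.

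Concretely, I fix a time $t_0$, set $M:=\|v(t_0)\|_2$ and $u:=v(t_0)/M$, and estimate how long $v$ stays positively aligned with $u$. Since $\|v(\tau)-v(t_0)\|_2\le D\int_{t_0}^\tau\beta(\sigma)\,d\sigma$, we retain $u^Tv(\tau)\ge M/2$ for all $\tau\in[t_0,t_1]$ provided $D\int_{t_0}^{t_1}\beta\le M/2$. On that window, projecting the identity $I(t_0)-I(t_1)=\int_{t_0}^{t_1}\gamma(\tau)v(\tau)\,d\tau$ onto $u$ gives
\[
\tfrac{M}{2}\int_{t_0}^{t_1}\gamma(\tau)\,d\tau \;\le\; u^T\big(I(t_0)-I(t_1)\big)\;\le\;\|I(t_0)\|_2+\|I(t_1)\|_2 \;=\; O\!\big(t_0^{-q/2}\big),
\]
where the last step uses assumption three together with $t_1>t_0$.

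It then remains to choose $t_1$ to make this inequality as tight as possible. Using $\beta(\sigma)\approx c^p\sigma^{-p}$ and $\gamma(\tau)\approx c/\tau$ for large times, the alignment constraint permits a window with $\int_{t_0}^{t_1}\gamma\asymp c^{1-p}M\,t_0^{p-1}/D$ (equivalently $t_1\approx t_0\bigl(1+\Theta(M t_0^{p-1})\bigr)$). Substituting back yields $M^2\,t_0^{\,p-1}\lesssim t_0^{-q/2}$, i.e.\ $\|v(t_0)\|_2^2 = O\big(t_0^{\,1-p-q/2}\big)$, which is the claimed decay rate.

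The main obstacle is exactly the tension flagged above: neither hypothesis alone suffices (a bounded $v$ with small velocity can still fail to converge, and a conditionally small tail integral need not control $\|v\|$ pointwise), so the argument must \emph{simultaneously} use the velocity bound to forbid fast turning and the tail bound to forbid sustained alignment. The delicate part is calibrating the window length $t_1-t_0$: too short a window wastes the accumulated $\int\gamma$, while too long a window violates the $u^Tv\ge M/2$ guarantee, and extracting the sharp exponent requires balancing the $1-p$ from integrating $\beta$ against the $q/2$ from the tail bound. A secondary technical point is making the heuristic $\asymp$ estimates for $\int\beta$ and $\int\gamma$ rigorous and uniform in $t_0$, including the lower-order corrections at moderate $t_0$.
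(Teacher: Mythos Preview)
Your proposal is correct and reaches the same exponent as the paper, but the argument is genuinely different from the paper's. The paper differentiates the tail quantity $\tfrac{1}{2}\|I(t)\|_2^2$ twice: the first differentiation yields $\gamma(t)\,v(t)^TI(t)$, the second yields $\gamma(t)\|v(t)\|_2^2 - v'(t)^TI(t)$, and the paper then argues heuristically that since $\tfrac{1}{2}\|I(t)\|_2^2=O(t^{-q})$, these derived quantities must decay at rates $O(t^{-q})$ and $O(t^{-q-1})$ respectively; plugging in $\|v'\|\le D\beta(t)$ and $\|I\|=O(t^{-q/2})$ then gives $\|v(t)\|_2^2=O(t^{-(q/2+p-1)})$. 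Your route is integral rather than differential: you freeze the direction $u=v(t_0)/M$, use the velocity bound to guarantee $u^Tv\ge M/2$ on a window $[t_0,t_1]$ with $\int_{t_0}^{t_1}\beta\asymp M/D$, and then bound $\tfrac{M}{2}\int_{t_0}^{t_1}\gamma$ by $\|I(t_0)\|+\|I(t_1)\|$. Balancing the window length produces the identical rate. Your approach has the advantage of avoiding the paper's ``decay of a function controls decay of its derivative'' step, which is not literally true in general and is used only heuristically there; the price you pay is having to make the window asymptotics $\int_{t_0}^{t_1}\gamma\asymp (\gamma(t_0)/\beta(t_0))\cdot M/D$ uniform in $t_0$, which you correctly flag as the remaining technical work. (Note also that the exponent in the lemma statement carries a sign typo: both proofs actually give $\|v(t)\|_2^2=O(t^{-(q/2+p-1)})$.)
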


\begin{proof}

We start with the orbiting property.
\[\frac{d}{dt} \left(\frac{1}{2}\|\int_t^\infty \gamma(\tau) v(\tau) d\tau\|_2^2\right) = -\int_t^\infty \gamma(t)\gamma(\tau) v(\tau)^Tv(t) d\tau \leq 0.
\]
Since this is happening asymptotically, then the negative derivative of the LHS must be upper bounded by the negative derivative of the RHS. That is, if a function is decreasing asymptotically at a certain rate, then its negative derivative should be decaying asymptotically at the negative derivative of this rate. So,

\[
\int_t^\infty \gamma(\tau) v(\tau)^Tv(t) d\tau \leq  O(1/t^q).
\]
This indicates either that $\|v(t)\|_2$ is getting smaller (converging) or $v(t)$ and its average are becoming more and more uncorrelated (orbiting).

Doing the same trick again with the negative derivative,
\[
-\frac{d}{dt}\int_t^\infty \gamma(\tau) v(\tau)^Tv(t) d\tau  = \gamma(t)\|v(t)\|_2^2 - \int_t^\infty \gamma(\tau) v(\tau)^Tv'(t)  d\tau
\]
By similar logic, this guy should also be decaying at a rate $O(1/t^{q+1})$, so 

\[
\gamma(t)\|v(t)\|_2^2  \leq \frac{C_2}{t^{q+1}} +  \int_t^\infty \gamma(\tau) v(\tau)^Tv'(t)  d\tau
\leq \frac{C_2}{t^{q+1}} +  \underbrace{\|\int_t^\infty \gamma(\tau) v(\tau)d\tau\|_2}_{\leq O(1/t^{q/2})}D\beta(t) = \frac{C_2}{t^{q+1}} + \frac{C_3}{t^{q/2+p}}
\]

Therefore 
\[
\|v(t)\|_2^2  \leq  \frac{C_2}{t^{q}} + \frac{C_3}{t^{q/2+p-1}} = O(\frac{1}{t^{q/2+p-1}}).
\]
\end{proof}

\begin{corollary}
Suppose $f$ is $\mu$-strongly convex. Then 
\[
\|\bar s(t)-x(t)\|_2^2 \leq Ct^{-(q/2+p-1)}
\]
for some constant $C>0$.
\end{corollary}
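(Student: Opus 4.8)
The plan is to invoke the Continuous averaging lemma with the single choice $v(t) := \bar s(t) - x(t)$, so that the distance we want to control is exactly $\|v(t)\|_2^2$, and then to read off the exponent $q$ from the convergence rate of $h(t) = f(x(t)) - f(x^*)$. The work is entirely in checking the lemma's three hypotheses for this $v$. For the boundedness hypothesis I would note that $\bar s(t)$ is a convex combination of LMO outputs, hence lies in $\mD$, and $x(t)\in\mD$, so compactness gives $\|v(t)\|_2 \le \diam(\mD) =: 2D$ for all $t$. For the velocity hypothesis I would differentiate and substitute the \AvgflowFW~dynamics to get $v'(t) = \beta(t)(s(t)-\bar s(t)) - \gamma(t)(\bar s(t)-x(t))$, so that $\|v'(t)\|_2 \le 2D(\beta(t)+\gamma(t))$; since $\gamma(t) = \tfrac{c}{c+t} \le \left(\tfrac{c}{c+t}\right)^p = \beta(t)$ for $0 < p \le 1$, this is $O(\beta(t))$, matching the hypothesis up to a constant absorbed into $D$.

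The crucial step is the third, orbiting, hypothesis, and this is where strong convexity enters. Here I would use the flow identity $\dot x(\tau) = \gamma(\tau)(\bar s(\tau)-x(\tau)) = \gamma(\tau) v(\tau)$, which turns the integral appearing in the lemma into a telescoping object:
\[
\int_t^\infty \gamma(\tau) v(\tau)\, d\tau = \int_t^\infty \dot x(\tau)\, d\tau = x^* - x(t),
\]
using that $x(t)\to x^*$. Strong convexity then gives $\tfrac{1}{2}\|x^*-x(t)\|_2^2 \le \tfrac{1}{\mu}(f(x(t))-f(x^*)) = \tfrac{1}{\mu} h(t)$, so the hypothesis holds with $q$ determined by the decay rate of $h(t)$: globally $h(t)=O(1/t^{1-p})$ gives $q = 1-p$, while after manifold identification $h(t)=O(\log(t)/t^c)$ gives $q=c$ up to the logarithmic factor. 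With all three hypotheses verified, the lemma delivers $\|v(t)\|_2^2 = O(t^{-(q/2+p-1)})$, which is the claim.

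The main obstacle I anticipate is this orbiting hypothesis, on two fronts. First, the exchange $\int_t^\infty \dot x = x^* - x(t)$ requires $x(t)$ to genuinely converge, i.e. $\gamma v$ to be integrable, which must be justified from $h(t)\to 0$ rather than merely assumed. Second, one has to be careful about which rate for $h(t)$ supplies $q$ in the local versus global regimes, and (in the local case) whether the stray $\log(t)$ factor is benign — I expect it to be absorbable since $q/2+p-1 < q$ strictly for $p<1$, leaving room to swallow a logarithm. A minor technical point is that the lemma's velocity hypothesis is phrased as the equality $\|v'(t)\|_2 = \beta(t)D$, whereas I can only guarantee the inequality $O(\beta(t))$; I would observe that the lemma's proof uses only the upper bound on $\|v'(t)\|_2$, so the inequality suffices.
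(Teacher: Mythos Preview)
Your proposal is correct and follows essentially the same route as the paper: set $v(t)=\bar s(t)-x(t)$, verify the three hypotheses of the continuous averaging lemma, and use the telescoping identity $\int_t^\infty \gamma(\tau)v(\tau)\,d\tau = x^*-x(t)$ together with strong convexity for the orbiting condition. Your write-up actually supplies more detail than the paper's proof (which dispatches the first two conditions in a single clause), and your remarks on the equality-versus-inequality phrasing of the velocity hypothesis and on identifying $q$ with the decay exponent of $h(t)$ are useful clarifications the paper leaves implicit.
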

\begin{proof}
Taking $v(t) = \bar s(t)-x(t)$, it is clear that if $\beta(t) \geq \gamma(t)$ then the first two conditions are satisfied. 
In the third condition, note that 
\[
\int_t^\infty \gamma(\tau) (\bar s(\tau)-x(\tau))d\tau = \int_t^\infty \dot x(\tau) d\tau = x^*-x(t)
\]
and therefore
\[
\frac{1}{2}\|\int_t^\infty \gamma(\tau) v(\tau) d\tau\|_2^2 = \frac{1}{2}\|x^*-x(t)\|_2^2 \leq \mu (f(x)-f^*)
\]
by strong convexity.
\end{proof}

\begin{lemma}[Discrete averaging]
Consider some vector trajectory $\bv_k\in \mathbb R^n$. Then the following properties cannot all be true.
\begin{itemize}
    \item $\|\bv_k\|_2 \leq D$ for arbitrarily large $k$

\item  $\|\bv_{k+1}-\bv_k\|_2 \leq \beta_k D$  

\item  $\frac{1}{2}\|\sum_{i=k}^\infty  \gamma_k \bv_k\|_2^2 \leq \frac{C_1}{k^q}$  for $q > 0$.
\end{itemize}
Then $\|\bv_k\|^2_2 \leq O(k^{q/2+p-1})$.

\end{lemma}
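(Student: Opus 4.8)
The plan is to mirror the continuous-averaging argument step for step, replacing time derivatives by forward differences, integrals by tail sums, and the differentiation $-\tfrac{d}{dt}$ by summation by parts. Write $\mathbf{W}_k := \sum_{i=k}^\infty \gamma_i \bv_i$ for the tail sum, so that $\mathbf{W}_k - \mathbf{W}_{k+1} = \gamma_k \bv_k$ and the third hypothesis reads $G_k := \tfrac12\|\mathbf{W}_k\|_2^2 \leq C_1/k^q$. Here $\mathbf{W}_k$ plays the role of $\int_t^\infty \gamma(\tau)v(\tau)\,d\tau$ and $G_k$ the role of the orbiting potential.

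First, I would expand the forward difference $G_k - G_{k+1}$ using $\mathbf{W}_{k+1} = \mathbf{W}_k - \gamma_k\bv_k$ to get the exact identity
\[
\gamma_k\,\bv_k^T\mathbf{W}_k = (G_k - G_{k+1}) + \tfrac12\gamma_k^2\|\bv_k\|_2^2,
\]
the discrete analog of the first derivative step, which controls the scalar sequence $P_k := \bv_k^T\mathbf{W}_k$. Next I would take a second difference: expanding $P_k - P_{k+1}$ and again substituting $\mathbf{W}_k = \mathbf{W}_{k+1}+\gamma_k\bv_k$ isolates the target,
\[
\gamma_k\|\bv_k\|_2^2 = (P_k - P_{k+1}) - (\bv_k - \bv_{k+1})^T\mathbf{W}_{k+1}.
\]
The cross term is where the two remaining hypotheses enter through Cauchy--Schwarz: the slow-variation bound $\|\bv_k-\bv_{k+1}\|_2 \leq \beta_k D = O(1/k^p)$ and $\|\mathbf{W}_{k+1}\|_2 = O(1/k^{q/2})$ give $|(\bv_k - \bv_{k+1})^T\mathbf{W}_{k+1}| = O(1/k^{p+q/2})$. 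Granting provisionally that each differencing step gains one power of $k$ (so $G_k-G_{k+1}=O(1/k^{q+1})$ and $P_k-P_{k+1}=O(1/k^{q+1})$), I would collect $\gamma_k\|\bv_k\|_2^2 = O(1/k^{q+1}) + O(1/k^{p+q/2})$ and divide by $\gamma_k = \Theta(1/k)$, keeping the dominant term ($q/2+p-1\leq q$ since $p<1$) to obtain $\|\bv_k\|_2^2 = O(1/k^{q/2+p-1})$. This also explains why both hypotheses are essential: the naive one-step estimate $\gamma_k\|\bv_k\|_2 = \|\mathbf{W}_k-\mathbf{W}_{k+1}\|_2$ cannot be closed without an independent bound on how fast $\bv_k$ moves.

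The main obstacle is exactly the step glossed in the continuous proof as ``its negative derivative should decay one order faster'': passing from an $O(1/k^q)$ bound on a sequence to an $O(1/k^{q+1})$ bound on its forward differences. This is false pointwise in discrete time, since a non-monotone sequence of order $1/k^q$ can have single-step jumps much larger than $1/k^{q+1}$. I would therefore avoid the pointwise claim and work instead with telescoped identities, which are exact: summing the first-difference identity over $i \geq k$ gives the rigorous averaged bound
\[
\sum_{i=k}^\infty \gamma_i\,\bv_i^T\mathbf{W}_i = G_k + \tfrac12\sum_{i=k}^\infty \gamma_i^2\|\bv_i\|_2^2 = O(1/k^q),
\]
using $\|\bv_i\|_2 \leq D$ and $q \leq 1$. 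The slow-variation hypothesis then lets me localize such a summed bound to a pointwise one over a window of length $\Theta(k)$, on which $\bv_k$ and $\mathbf{W}_k$ change only by a constant factor. The delicate bookkeeping is to verify that the window-averaging together with the two difference steps produces exactly the exponent $q/2+p-1$ rather than a lossier one; this is the part I would check most carefully.
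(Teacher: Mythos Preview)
Your proposal follows essentially the same two-step differencing argument as the paper (the paper works with $Q_k = P_k - \tfrac{\gamma_k}{2}\|\bv_k\|_2^2$ instead of your $P_k$, which picks up a harmless extra $O(1/k^{2p})$ term but yields the same dominant exponent), with the same Cauchy--Schwarz bound on the cross term $(\bv_k-\bv_{k+1})^T\mathbf{W}_{k+1}$. You are right to flag the ``forward difference decays one order faster'' step as a gap: the paper also leaves this heuristic, saying only that the inequalities ``must hold asymptotically in order for the asymptotic decay rates to hold,'' and does not carry out the telescoping/windowing repair you sketch.
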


\begin{proof}
The idea is to recreate the same proof steps as in the previous lemma. Note that the claimis not that these inequalities happen at each step, but that they must hold asymptotically in order for the asymptotic decay rates to hold. So
\[ 
\frac{1}{2}\|\sum_{i=k}^\infty \gamma_i \bv_i\|_2^2  -\frac{1}{2}\|\sum_{i=k}^\infty \gamma_{i+1} \bv_{i+1}\|_2^2 
=\frac{\gamma_k^2}{2}\|\bv_k\|_2^2 + \gamma_k\bv_k^T\left(\sum_{i=k}^\infty \gamma_{i+1} \bv_{i+1}\right)  \leq \frac{C_1}{(k+1)^q}-\frac{C_1}{k^q}  =\frac{C_2}{k^{q}}
\]

and therefore
\[
\frac{\gamma_k}{2}\|\bv_k\|_2^2 + \bv_k^T\left(\sum_{i=k}^\infty \gamma_{i+1} \bv_{i+1}\right) \leq \frac{C_1}{(k+1)^{q+1}}-\frac{C_1}{k^{q+1}} = \frac{C_2}{k^{q+1}}.
\]
Next,
\begin{eqnarray*}
&&\bv_k^T\left(\sum_{i=k}^\infty \gamma_{i+1} \bv_{i+1}\right)  -  \bv_{k+1}^T\left(\sum_{i=k+1}^\infty \gamma_{i+1} \bv_{i+1}\right) + \bv_{k}^T\left(\sum_{i=k+1}^\infty \gamma_{i+1} \bv_{i+1}\right)- \bv_{k}^T\left(\sum_{i=k+1}^\infty \gamma_{i+1} \bv_{i+1}\right) \\
&=&\gamma_{k+1} \bv_k^T  \bv_{k+1}  + (\bv_k-\bv_{k+1})^T\left(\sum_{i=k+1}^\infty \gamma_{i+1} \bv_{i+1}\right)
\end{eqnarray*}

\begin{eqnarray*}
\frac{\gamma_k}{2}\|\bv_k\|_2^2 - 
\frac{\gamma_{k+1}}{2}\|\bv_{k+1}\|_2^2 +\bv_k^T\left(\sum_{i=k}^\infty \gamma_{i+1} \bv_{i+1}\right)  -  \bv_{k+1}^T\left(\sum_{i=k+1}^\infty \gamma_{i+1} \bv_{i+1}\right) =\\
\frac{\gamma_k}{2}\|\bv_k\|_2^2 \underbrace{- 
\frac{\gamma_{k+1}}{2}\|\bv_{k+1}\|_2^2
+
\gamma_{k+1} \bv_k^T  \bv_{k+1}}_{-\frac{\gamma_{k+1}}{2}\|\bv_{k+1}-\bv_k\|_2^2 + \frac{\gamma_{k+1}}{2}\|\bv_k\|_2^2}\  +  (\bv_k-\bv_{k+1})^T\left(\sum_{i=k+1}^\infty \gamma_{i+1} \bv_{i+1}\right) \leq \frac{C_3}{k^{q+1}}
\end{eqnarray*}
Therefore 
\[
\frac{\gamma_k+\gamma_{k+1}}{2}\|\bv_k\|_2^2  \leq \frac{C_3}{k^{q+1}} + \underbrace{(\bv_{k+1}-\bv_k)^T\left(\sum_{i=k+1}^\infty \gamma_{i+1} \bv_{i+1}\right)}_{O(\beta_k/k^{q/2})} + \underbrace{\frac{\gamma_{k+1}}{2}\|\bv_{k+1}-\bv_k\|_2^2}_{O(\gamma_k\beta_k^2)}
\]
Finally,
\[
\|\bv_k\|_2^2  \leq  \frac{C_3}{k^{q}} + \frac{C_4}{k^{q/2+p-1}} + \frac{C_5}{k^{2p}} = O(1/k^{\min\{q/2+p-1,2p\}}).
\]
\end{proof}

\begin{corollary}
Suppose $f$ is $\mu$-strongly convex. Then if $f(x)-f^* = O(k^{-q})$
\[
\|\bar \bs_k-\bx_k\|_2^2 \leq C\max\{k^{-(q/2+p-1)},k^{-2p}\} 
\]
for some constant $C>0$.
\end{corollary}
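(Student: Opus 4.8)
The plan is to apply the Discrete averaging Lemma verbatim with the choice $\bv_k := \bar \bs_k - \bx_k$. Its conclusion $\|\bv_k\|_2^2 = O(1/k^{\min\{q/2+p-1,\,2p\}})$ is exactly the claimed bound, since for $k\geq 1$ we have $\max\{k^{-(q/2+p-1)},k^{-2p}\} = k^{-\min\{q/2+p-1,\,2p\}}$. All the work therefore reduces to verifying the three hypotheses of the lemma for this particular $\bv_k$; conditions one and two are geometric and essentially free, while condition three is where strong convexity and the assumed global rate enter.

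First I would dispatch the boundedness hypothesis. Each $\bs_i$ is an LMO output and hence lies in $\mD$, and by the accumulation lemma $\bar \bs_k = \sum_i \beta_{k,i}\bs_i$ is a convex combination with $\sum_i \beta_{k,i}=1$, so $\bar \bs_k \in \mD$; similarly $\bx_k \in \mD$ as a convex combination of earlier iterates and atoms. Hence $\|\bv_k\|_2 \leq \diam(\mD) \leq 2D$, giving the first hypothesis with constant $2D$. For the slow-velocity hypothesis I would use the two update rules: the $\bar \bs$ step moves by $\beta_k(\bs_k-\bar \bs_k)$ and the $\bx$ step by $\gamma_k(\bar \bs_k-\bx_k)$, so
\[
\bv_{k+1} - \bv_k = \beta_k(\bs_k - \bar \bs_k) - \gamma_k(\bar \bs_k - \bx_k).
\]
Each bracketed difference is a difference of points of $\mD$ and so has norm at most $2D$; and since $0<p\leq 1$ forces $\gamma_k = c/(c+k) \leq (c/(c+k))^p = \beta_k$, we conclude $\|\bv_{k+1}-\bv_k\|_2 \leq 2(\beta_k+\gamma_k)D = O(\beta_k D)$, matching the second hypothesis up to an absolute constant.

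The key step is the orbiting hypothesis. Because $\bx_{k+1}-\bx_k = \gamma_k(\bar \bs_k-\bx_k) = \gamma_k \bv_k$, telescoping gives $\sum_{i=k}^{N-1}\gamma_i \bv_i = \bx_N - \bx_k$; letting $N\to\infty$ and invoking the global convergence theorem (so that $\bx_N \to \bx^*$, unique under strong convexity) yields the clean identity $\sum_{i=k}^\infty \gamma_i \bv_i = \bx^*-\bx_k$. Strong convexity then bounds $\tfrac12\|\bx^*-\bx_k\|_2^2 \leq \tfrac1\mu(f(\bx_k)-f^*)$, and substituting the hypothesis $f(\bx_k)-f^* = O(k^{-q})$ delivers $\tfrac12\|\sum_{i=k}^\infty\gamma_i\bv_i\|_2^2 = O(k^{-q})$, which is the third condition with $q$ as stated. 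The Discrete averaging Lemma then closes the argument.

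I expect the main obstacle to be the justification of the telescoping limit in the third condition: one must know \emph{a priori} that the tail $\sum_{i=k}^\infty \gamma_i\bv_i$ converges, which rests on $\bx_k\to\bx^*$ from the global rate rather than on the averaging estimate being proved. Everything else is bookkeeping, but I would take care that the absolute constants absorbed into conditions one and two do not interfere with the exact $\beta_k$- and $\gamma_k$-scalings on which the lemma's rate depends.
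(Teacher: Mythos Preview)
Your proposal is correct and follows essentially the same route as the paper: set $\bv_k=\bar\bs_k-\bx_k$, check the three hypotheses of the Discrete averaging Lemma, and for the third one telescope $\sum_{i\geq k}\gamma_i\bv_i=\bx^*-\bx_k$ and invoke strong convexity together with the assumed rate $f(\bx_k)-f^*=O(k^{-q})$. You are in fact more careful than the paper in spelling out why conditions one and two hold (the paper simply asserts them from $\beta_k\geq\gamma_k$), and you correctly write the strong-convexity bound with $1/\mu$ rather than $\mu$; the only cosmetic slip is the index in your displacement formula, which should read $\bv_{k+1}-\bv_k=\beta_{k+1}(\bs_{k+1}-\bar\bs_k)-\gamma_k(\bar\bs_k-\bx_k)$, but this does not affect the $O(\beta_k D)$ estimate.
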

\begin{proof}
Taking $\bv_k = \bar \bs_k-\bx_k$, it is clear that if $\beta(t) \geq \gamma(t)$ then the first two conditions are satisfied. 
In the third condition, note that 
\[
\sum_{i=k}^\infty \gamma_i (\bar \bs_i-\bx_i) = \sum_{i=k}^\infty \bx_{i+1}-\bx_i = \bx^*-\bx_k
\]
and therefore
\[
\frac{1}{2}\|\sum_{i=k}^\infty \gamma_i (\bar \bs_i-\bx_i)\|_2^2 = \frac{1}{2}\|\bx^*-\bx_k\|_2^2 \leq \mu (f(\bx_k)-f^*)
\]
by strong convexity.
\end{proof}

\section{Global rates}
\label{app:sec:global}

\begin{lemma}[Continuous energy function decay]
\label{lem:continuous_energy}
Suppose $c \geq q-1$, and
\[
 g(t) \leq   -\int_0^t \frac{\exp(\alpha(\tau))}{\exp(\alpha(t))}\frac{(c+\tau)^c}{(c + t)^{c}} \frac{C_1}{(b+\tau)^r}d\tau
\]
Then
\[
g(t) \leq -\frac{\left(1-\frac{\alpha(1)}{\exp(\alpha(1))}\right)C_1}{\alpha(t)(1-p)}(c+t)^{1-r} 
\]
 
\end{lemma}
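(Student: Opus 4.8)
The plan is to read the right-hand side of the hypothesis as an accumulation integral of exactly the type treated at the start of Appendix~\ref{app:sec:accumulation}, and then extract its leading-order behaviour. Writing $E(\tau)=\exp(\alpha(\tau))$ and recalling $\alpha'(\tau)=\beta(\tau)=c^{p}/(c+\tau)^{p}$, so that $E'(\tau)=\beta(\tau)E(\tau)$, the content of the lemma is a single \emph{lower} bound: since the hypothesis reads $g(t)\le -I(t)$ with
\[
I(t)=\frac{C_1}{E(t)(c+t)^{c}}\int_0^t E(\tau)\,\frac{(c+\tau)^{c}}{(b+\tau)^{r}}\,d\tau,
\]
and the claimed conclusion is $g(t)\le -B(t)$ with $B(t)=\big(1-\alpha(1)e^{-\alpha(1)}\big)C_1(c+t)^{p-r}/c^{p}$ (using $\alpha(t)(1-p)=c^{p}(c+t)^{1-p}$ to rewrite the stated form), it suffices to prove $I(t)\ge B(t)$.

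First I would strip off the slowly varying factor: since $r\ge 0$ and $\tau\le t$, we have $(b+\tau)^{-r}\ge (b+t)^{-r}$, so $I(t)\ge C_1(b+t)^{-r}M(t)$ with $M(t)=\tfrac{1}{E(t)(c+t)^{c}}\int_0^t E(\tau)(c+\tau)^{c}\,d\tau$. This reduces everything to showing $M(t)\ge\big(1-\alpha(1)e^{-\alpha(1)}\big)(c+t)^{p}/c^{p}=\big(1-\alpha(1)e^{-\alpha(1)}\big)/\beta(t)$; the exponent $p-r$ then reappears on multiplying back by $C_1(b+t)^{-r}$ (the constants $b$ and $c$ coinciding in the regime of interest, as noted in Section~3). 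To locate the leading term of $M(t)$ exactly, I would substitute $E(\tau)=c^{-p}(c+\tau)^{p}E'(\tau)$ and integrate by parts:
\[
c^{p}\!\int_0^t E(\tau)(c+\tau)^{c}\,d\tau=(c+t)^{c+p}E(t)-c^{c+p}E(0)-(c+p)\!\int_0^t (c+\tau)^{c+p-1}E(\tau)\,d\tau .
\]
Dividing by $c^{p}E(t)(c+t)^{c}$ yields the exact leading term $(c+t)^{p}/c^{p}=1/\beta(t)$, a vanishing boundary term $\sim E(0)/E(t)$, and a correction integral.

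That correction is the crux, and it is where the argument must be handled with care: it is \emph{subtracted}, so a naive estimate produces an upper bound on $M(t)$, the opposite of what is needed. I would control it in one of two ways. The robust route bounds $(c+\tau)^{p-1}\le c^{p-1}$, which shows the correction is at most $\tfrac{c+p}{c}M(t)$ and so can be absorbed, leaving $M(t)\gtrsim 1/\beta(t)$ with some constant below $1$. To recover the \emph{sharp} constant $1-\alpha(1)e^{-\alpha(1)}$ quoted in the lemma, I would instead exploit that the weight $E(\tau)(c+\tau)^{c}$ is Laplace-concentrated at $\tau=t$: splitting $\int_0^t=\int_0^1+\int_1^t$ and applying the accumulation identity $\int_{\sigma}^{t}\beta(\tau)E(\tau)\,d\tau=E(t)-E(\sigma)$ from the start of Appendix~\ref{app:sec:accumulation} at $\sigma=1$ isolates precisely the factor $1-E(1)/E(t)$, which is where the value $\alpha(1)$ (rather than $\alpha(0)$) enters; the hypothesis $c\ge q-1$ is what keeps the polynomial dressing monotone so that the endpoint $\tau=t$ genuinely dominates.

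The main obstacle is thus entirely the \emph{direction} of the estimate. Every natural manipulation — integration by parts, the concavity of $\alpha$, monotonicity of $E$ — points toward upper bounds, since the correction terms share the sign of the leading term. The proof must instead show that these corrections consume only a fixed fraction of the mass, equivalently that the exponential $E(\tau)$ overwhelms the polynomial $(c+\tau)^{c-r+p}$ so the integral concentrates at the upper endpoint. The hypothesis $c\ge q-1$ together with $p<1$ (which gives $\beta(t)\gg\gamma(t)$ and makes $1/\beta(t)=(c+t)^{p}/c^{p}$ grow only sublinearly) are exactly the conditions that guarantee this concentration and fix the explicit constant. Once $M(t)\ge\big(1-\alpha(1)e^{-\alpha(1)}\big)/\beta(t)$ is established, multiplying by $C_1(b+t)^{-r}$ gives $I(t)\ge\big(1-\alpha(1)e^{-\alpha(1)}\big)C_1(c+t)^{p-r}/c^{p}$, which is the stated bound after rewriting via $\alpha(t)(1-p)=c^{p}(c+t)^{1-p}$.
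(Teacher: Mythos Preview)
Your route is genuinely different from the paper's. The paper does not integrate by parts or invoke Laplace concentration. Instead it multiplies through by $E(t)(c+t)^c$, \emph{Taylor-expands} $\exp(\alpha(\tau))=\sum_{k\ge 1}\alpha(\tau)^k/k!$, substitutes $\alpha(\tau)^k=c^{kp}(c+\tau)^{k(1-p)}/(1-p)^k$, integrates each resulting monomial $(c+\tau)^{k(1-p)+c-r}$ explicitly over $[0,t]$, and then \emph{re-sums} the series to recognize a multiple of $(e^{\alpha(t)}-\alpha(t)-1)/\alpha(t)$, from which the constant $1-\alpha(1)e^{-\alpha(1)}$ drops out after one monotonicity step. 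The hypothesis $c\ge q-1$ (with $q=r$) is used there, to keep every exponent $k(1-p)+c-r+1$ positive so that the lower endpoint of each term can be discarded.

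Your initial move of pulling out $(b+\tau)^{-r}\ge(b+t)^{-r}$ is a clean simplification the paper does not make: it renders the remaining integral $M(t)$ independent of $r$. Your integration-by-parts identity is correct and gives the leading term $1/\beta(t)$ on the nose, and your option~(a) genuinely closes: absorbing the correction via $(c+\tau)^{p-1}\le c^{p-1}$ yields $M(t)\ge\frac{c}{2c+p}\cdot\frac{1}{\beta(t)}$ up to exponentially small terms. That is the right order with a weaker constant, and since the lemma only feeds the $O(1/t^{1-p})$ rate in the subsequent theorem, this already suffices for the downstream application.

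The gap is in option~(b), and relatedly in your use of the hypothesis. After stripping out $(b+t)^{-r}$, the integral you face is $\int_0^t E(\tau)(c+\tau)^c\,d\tau$, not $\int_0^t\beta(\tau)E(\tau)\,d\tau$; the accumulation identity from Appendix~\ref{app:sec:accumulation} therefore does not apply directly, and splitting at $\tau=1$ does not by itself isolate the factor $1-E(1)/E(t)$ --- you still have to control the polynomial dressing $(c+\tau)^{c+p}$, which lands you back in an estimate of the same flavor as~(a). Moreover, since $r$ has already been removed, the condition $c\ge q-1$ plays no role in your argument at all; in the paper it is used \emph{after} the Taylor expansion to control the signs of the exponents $k(1-p)+c-r+1$, not to enforce endpoint concentration. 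So if the exact constant $1-\alpha(1)e^{-\alpha(1)}$ is required, the Taylor-series-and-resum route is the one that delivers it; your argument as written yields the correct order but not the stated constant.
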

\begin{proof}

\begin{eqnarray*}
\frac{g(t)}{C_1}\exp(\alpha(t))(c+t)^{c}&\leq&- \int_0^t \exp(\alpha(\tau)) (c+\tau)^{c-r} d\tau\\
&=& - \int_0^t \sum_{k=1}^\infty \frac{\alpha(\tau)^k}{k!}(c+\tau)^{c-r} d\tau\\
&=& -\int_0^t \sum_{k=1}^\infty \frac{c^{kp}}{k!(1-p)^k}(c+\tau)^{k-pk+c-r} d\tau\\
&\overset{\text{Fubini}}{=}&  -\sum_{k=1}^\infty \frac{c^{kp}}{(1-p)^kk!}\int_0^t (c+\tau)^{k-pk+c-r} d\tau\\
&=&  - \sum_{k=1}^\infty \frac{c^{kp}}{(1-p)^kk!} \frac{(c+t)^{k-pk+c-r+1}-c^{k-pk+c-r+1}}{k-pk+c-r+1} \\
&=&   -\sum_{k=1}^\infty \frac{1}{(k+1)!}\left(\frac{c^{p}}{(c+t)^{p-1}(1-p)}\right)^k(c+t)^{1+c-r} \underbrace{\frac{1}{(1-p)+(c-r+1)/k}\frac{k+1}{k}}_{\geq C_2} \\
&\leq& -C_2 \sum_{k=1}^\infty \frac{(c+t)^{1+c-r}}{(k+1)!}\frac{\alpha(t)^{k+1}}{\alpha(t)}\\
&=& -\frac{C_2(c+t)^{1+c-r} }{\alpha(t)}(\exp(\alpha(t))-\alpha(1))
\end{eqnarray*}
Then

\begin{eqnarray*}
g(t) &\leq& -\frac{C_1C_2}{\alpha(t)}(c+t)^{1-r}\left(1-\frac{\alpha(1)}{\exp(\alpha(t))}\right)\\
 &\leq& -\frac{C_1C_2}{\alpha(t)}(c+t)^{1-r}\left(1-\frac{\alpha(1)}{\exp(\alpha(1))}\right)\\
 &\leq& -\frac{C_1C_3}{\alpha(t)}(c+t)^{1-r}
\end{eqnarray*}
where $C_3 = C_2\left(1-\frac{\alpha(1)}{\exp(\alpha(1))}\right)$  and $C_2 = \frac{1}{1-p}$ satisfies the condition.
\end{proof}
\begin{theorem}[Continuous global rate]
Suppose $0 < p < 1$. 
Then the averaged FW flow decays as $O(1/t^{1-p})$.
\end{theorem}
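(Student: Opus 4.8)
The plan is to track the objective residual $h(t) = f(x(t)) - f^*$ jointly with the averaged gap-like quantity $\bar g(t) := \nabla f(x(t))^T(\bar s(t) - x(t))$, and to show that the averaging injects into $\bar g$ a forcing term that decays exactly fast enough to throttle $h$ at the claimed rate. Everything reduces to controlling $\bar g$, since differentiating the objective along the flow gives the identity $\dot h(t) = \nabla f(x(t))^T\dot x(t) = \gamma(t)\bar g(t)$.

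First I would differentiate $\bar g$ and substitute the defining equations $\dot{\bar s} = \beta(t)(s-\bar s)$ and $\dot x = \gamma(t)(\bar s - x)$, together with the LMO identity $\nabla f(x)^T(s-x) = -\gap(x)$, to obtain the linear differential inequality
\[
\dot{\bar g}(t) \;\leq\; -\beta(t)\,\gap(x(t)) \;-\; \big(\beta(t)+\gamma(t)\big)\bar g(t) \;+\; E(t),
\]
where the curvature/mismatch term $E(t) = \big(\tfrac{d}{dt}\nabla f(x(t))\big)^T(\bar s - x)$ is bounded, using $L$-smoothness and compactness of $\mD$, by $E(t) \le L\|\dot x\|_2\|\bar s - x\|_2 \le 4LD^2\gamma(t) = O(\gamma(t))$ with $D = \max_{x\in\mD}\|x\|_2$. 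This is the continuous analogue of the discrete recursion on $\bar\bg_k$ quoted in the main text.

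Next I would solve this inequality with the integrating factor $\exp\!\big(\int_0^t(\beta+\gamma)\big) = e^{\alpha(t)}\big(\tfrac{c+t}{c}\big)^c$, whose appearance is precisely why the double kernel $\tfrac{e^{\alpha(\tau)}}{e^{\alpha(t)}}\tfrac{(c+\tau)^c}{(c+t)^c}$ occurs in Lemma \ref{lem:continuous_energy}. After filtering against this (recency-weighted) kernel and using convexity $\gap(x)\ge h$, the $\gap$-term yields a descent contribution comparable to a positive multiple of $h(t)$, while the forcing term $E(t) = O(\gamma(t)) = O\big(C_1/(c+t)\big)$ is exactly of the form handled by Lemma \ref{lem:continuous_energy} with $r=1$, $b=c$, contributing a term of size $O(1/\alpha(t)) = O(t^{-(1-p)})$ since $\alpha(t) = \Theta(t^{1-p})$. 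Combining gives $\bar g(t) \le -\Theta(h(t)) + O(t^{-(1-p)})$, and feeding this into $\dot h = \gamma\bar g$ produces the closed scalar inequality $\dot h(t) \le -\gamma(t)h(t) + O(t^{-(2-p)})$. A final integration with factor $(c+t)^c$ (legitimate because $c$ is taken large enough, the analogue of the hypothesis $c\ge q-1$ in Lemma \ref{lem:continuous_energy}) balances the $t^{-(2-p)}$ forcing against the homogeneous $t^{-c}$ decay and yields $h(t) = O(t^{-(1-p)})$.

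The main obstacle is controlling the averaging lag buried in the forcing term. Because $\bar s(t)$ pairs the entire history of oracle outputs $s(\tau)$ against the current gradient $\nabla f(x(t))$, the quantity $\bar g(t)$ cannot instantaneously equal $-\gap(x(t))$; the size of the residual is governed by how fast $\bar s$ relaxes (rate $\beta$) relative to how fast $x$ drifts (rate $\gamma$). Making this quantitative is exactly where $p<1$ becomes essential: the residual in $\bar g$ scales like $\gamma/\beta = (c/(c+t))^{1-p}$, which decays precisely when $p<1$ and is a non-decaying constant at $p=1$, so the argument would collapse there. Pinning down this residual — and confirming that it, rather than the genuine descent from the averaged gap, is what sets the final rate — is the crux; the remaining bookkeeping is packaged into Lemma \ref{lem:continuous_energy} and the accumulation lemma of Appendix \ref{app:sec:accumulation}.
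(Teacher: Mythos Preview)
Your proposal follows essentially the same route as the paper: define $\bar g(t)=\nabla f(x(t))^T(\bar s(t)-x(t))$, differentiate it to obtain the linear inequality $\dot{\bar g}\le -\beta\,\gap(x)-(\beta+\gamma)\bar g + 4LD^2\gamma$, pass through the integrating factor $e^{\alpha(t)}(c+t)^c$, and handle the $O(\gamma)$ forcing via Lemma~\ref{lem:continuous_energy} with $r=1$ to extract the $O(t^{-(1-p)})$ scale. Your closing move---feeding $\bar g$ back into $\dot h=\gamma\bar g$ to get $\dot h\le -\gamma h + O(t^{-(2-p)})$ and then integrating with factor $(c+t)^c$---is in fact a bit cleaner than the paper's ``candidate solution'' verification.

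The one step you assert without argument is that the filtered gap term $\int_0^t K(t,\tau)\beta(\tau)\gap(x(\tau))\,d\tau$ is bounded below by a constant multiple of the \emph{current} value $h(t)$. Since the kernel weights the past and $\gap(x(\tau))\ge h(\tau)$ only controls past values, this step needs either eventual monotonicity of $h$ or an ansatz/bootstrap; the paper handles the same point by positing $h(t)=C_3/(c+t)^{1-p}$ and checking self-consistency, which is also not fully rigorous as written. So your gap here is the paper's gap, not a divergence from it.
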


\begin{proof}
Consider the error function 
\[
g(t) := \nabla f(x(t))^T(\bar s(t)-x(t)), \qquad g(0) = 0.
\]
\begin{eqnarray*}
\dot g(t) &=& \frac{\partial}{\partial t} \nabla f(x)^T(\bar s-x)\\
&=& \left(\frac{\partial}{\partial t}\nabla f(x)^T \right)(\bar s - x)
+
\nabla f(x)^T\left(\frac{\partial}{\partial t}(\bar s - x)\right)
 \\
&=&  \underbrace{ \left(\frac{\partial}{\partial t} \nabla f(x)^T \right)}_{={\dot x}^T\nabla^2 f(x)}(\bar s - x)
+
\nabla f(x)^T \left(\beta(t) (s(t)-\bar s(t)) - \gamma(t)(\bar s(t)-x(t))\right)\\
&\leq & \gamma \underbrace{(\bar s - x)^T\nabla^2 f(x) (\bar s - x)}_{\leq 4LD^2\gamma(t)} + \underbrace{\beta(t)\nabla f(x)^T(s(t)-x(t))}_{-\beta(t)\gap(x)} - (\beta(t)+\gamma(t)) \underbrace{\nabla f(x)^T (\bar s(t) -x(t))}_{=g(t)} 
\end{eqnarray*}

\begin{eqnarray*}
g(t) &\leq&    \int_0^t \frac{\exp(\alpha(\tau))}{\exp(\alpha(t))}\frac{(c+\tau)^c}{(c + t)^{c}} \underbrace{\left(\frac{4LD^2c}{c+\tau} -  \frac{b^p}{(b+\tau)^p}\gap(x(\tau))\right)}_{A(\tau)}d\tau\\
\dot h(t) &\leq&  \gamma(t)g(t) \leq  \gamma(t) \int_0^t \underbrace{\frac{\exp(\alpha(\tau))}{\exp(\alpha(t))}\frac{(c+\tau)^c}{(c + t)^{c}}}_{\mu(\tau)} A(\tau) d\tau 
\end{eqnarray*}
In order for $h(t)$ to decrease, it must be that $\dot h(t) \leq 0$. However, since $\mu(\tau) \geq 0$ for all $\tau \geq 0$, it must be that $A(\tau) \leq 0$, e.g.
\[
  \frac{c^p}{(c+\tau)^p}\gap(x(\tau))\geq \frac{4LD^2c}{c+\tau}.
\]
which would imply $h(t) = O(1/(c+t)^{1-p})$. Let us therefore test the candidate solution
\[
h(t) = \frac{C_3}{(c+t)^{1-p}}.
\]
Additionally, from Lemma \ref{lem:continuous_energy}, if 
\[
A(\tau) \leq -\frac{C_1}{(c+t)}
\quad \Rightarrow\quad
g(t) \leq -\frac{C_1}{\alpha(t)(1-p)}
\]
and therefore 
\begin{eqnarray*}
\dot h(t) &\leq&  \gamma(t)g(t)  \leq -\frac{c}{c+t} \frac{C_1}{c^{p}}\cdot(c+t)^{p-1} \\
\int_0^t \dot(h(\tau))d\tau&\leq& \frac{C_1 c}{(1-p)c^p}(c+t)^{p-1}
\end{eqnarray*}
which satisfies our candidate solution for $C_3 = \frac{C_1 c}{(p-1)c^p}$.
\end{proof}

This term $ (\bar s - x)^T\nabla^2 f(x) (\bar s - x)\leq 4LD^2\gamma(t)$ is an important one to consider when talking about local vs global distance. The largest values of the Hessian will probably not correspond to the indices that are ``active'', and thus this bound is very loose near optimality.

\begin{lemma}[Discrete energy decay]
\label{lem:discrete_energy}
Suppose $0 < p < 1$. 
Consider the error function 
\[
\bg_k := \nabla f(\bx_k)^T(\bar\bs_k-\bx_t).
\]

Then
\[
\bg_k \leq -\sum_{i=0}^{k-1} \beta_{i,i} (\frac{i+1+c}{k+c})^c \gap(\bx_i)   - \beta_{k,k}\gap(\bx_k)+  \frac{4D^2L C_1}{(k+c)^p} + (\frac{c}{k+c})^c \bg_0.
\]
where $C_1 = c^p(1+\frac{1}{(c+1)(1-p)-1})$.
\end{lemma}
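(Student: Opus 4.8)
The plan is to turn the definition $\bg_k = \nabla f(\bx_k)^T(\bar\bs_k - \bx_k)$ into a scalar linear recursion in $k$ and unroll it with a discrete integrating factor, mirroring the continuous argument behind Lemma~\ref{lem:continuous_energy}. The enabling algebraic step is a one-step identity for the discretization vector $\bar\bs_k - \bx_k$. Writing the two updates as convex combinations, $\bar\bs_{k+1} = (1-\beta_{k+1})\bar\bs_k + \beta_{k+1}\bs_{k+1}$ and $\bx_{k+1} = (1-\gamma_k)\bx_k + \gamma_k\bar\bs_k$, substituting $\bs_{k+1} = (\bs_{k+1}-\bx_{k+1}) + \bx_{k+1}$ and collecting the $\bar\bs_k$ and $\bx_k$ coefficients, I obtain the clean factorization
\[
\bar\bs_{k+1}-\bx_{k+1} = (1-\beta_{k+1})(1-\gamma_k)(\bar\bs_k-\bx_k) + \beta_{k+1}(\bs_{k+1}-\bx_{k+1}).
\]
This is the discrete analogue of $\tfrac{\partial}{\partial t}(\bar s-x)=\beta(s-\bar s)-\gamma(\bar s-x)$; crucially, routing the new-atom term through $\bx_{k+1}$ rather than $\bx_k$ makes $\bs_{k+1}-\bx_{k+1}$ the genuine FW direction at $\bx_{k+1}$, so LMO optimality can be applied at the correct point.

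Next I take $\nabla f(\bx_{k+1})^T$ of both sides. The new-atom term collapses to $-\beta_{k+1}\gap(\bx_{k+1})$ by optimality of $\bs_{k+1}=\lmo_\mD(\bx_{k+1})$. For the other term I trade the gradient at $\bx_{k+1}$ for the one at $\bx_k$, writing $\nabla f(\bx_{k+1})^T(\bar\bs_k-\bx_k) = \bg_k + (\nabla f(\bx_{k+1})-\nabla f(\bx_k))^T(\bar\bs_k-\bx_k)$, and controlling the second piece by $L$-smoothness together with $\diam(\mD)\le 2D$: since $\|\bx_{k+1}-\bx_k\| = \gamma_k\|\bar\bs_k-\bx_k\|\le 2D\gamma_k$, the error is at most $4LD^2\gamma_k$. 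This yields the scalar recursion
\[
\bg_{k+1} \le (1-\beta_{k+1})(1-\gamma_k)\big(\bg_k + 4LD^2\gamma_k\big) - \beta_{k+1}\gap(\bx_{k+1}).
\]

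I then unroll this recursion. With $a_j := (1-\beta_{j+1})(1-\gamma_j)$, the solution is $\bg_k \le \big(\prod_{j=0}^{k-1}a_j\big)\bg_0 + \sum_{j}\big(\prod_{l>j}a_l\big)\big(4LD^2\gamma_j a_j - \beta_{j+1}\gap(\bx_{j+1})\big)$, which splits into three pieces. For the gap coefficients I drop the $(1-\beta_l)\le 1$ factors to expose $\beta_{i,i}=\beta_i$, and bound the remaining product $\prod(1-\gamma_l)=\prod\tfrac{l}{c+l}$ above by the continuous integrating-factor ratio $\left(\frac{c+i+1}{c+k}\right)^c$ (the current-iterate term carries no such product and stays $\beta_{k,k}$); these are precisely the product estimates already used to establish $\sum_i\beta_{k,i}=1$ in the accumulation lemma of Appendix~\ref{app:sec:accumulation}. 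The initial term's coefficient is bounded the same way by the integrating factor anchored at the start, $\left(\frac{c}{c+k}\right)^c$. What remains is the constant accumulation $4LD^2\sum_j \gamma_j\prod_{l\ge j}a_l$.

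The main obstacle is this last piece: bounding the accumulated discretization constant by $\frac{4D^2LC_1}{(c+k)^p}$ and tracking the explicit value $C_1 = c^p\big(1+\frac{1}{(c+1)(1-p)-1}\big)$. Here I keep the smoothing factors $\prod(1-\beta_l)$, which are what force the accumulation to decay, and compare the resulting weighted sum to an integral, using $\gamma_j\le\beta_{j+1}$ and $\beta_{j+1}\le c^p(c+j)^{-p}$; the hypothesis $0<p<1$ is what makes the relevant tail summable, and the factor $\frac{1}{(c+1)(1-p)-1}$ emerges from the integral/geometric comparison evaluated at the first index. This step is delicate precisely because discrete accumulation does not telescope as cleanly as the continuous integral in Lemma~\ref{lem:continuous_energy} (the same mismatch responsible for the non-reciprocal global rates noted after the global theorem), so the constant must be carried by hand rather than read off a closed form. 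Assembling the three pieces then gives the stated bound.
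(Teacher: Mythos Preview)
Your proposal is correct and follows essentially the same route as the paper. The only cosmetic difference is in how the one-step recursion is obtained: you derive the vector identity $\bar\bs_{k+1}-\bx_{k+1}=(1-\beta_{k+1})(1-\gamma_k)(\bar\bs_k-\bx_k)+\beta_{k+1}(\bs_{k+1}-\bx_{k+1})$ first and then take the inner product with $\nabla f(\bx_{k+1})$, whereas the paper expands $\bg_k$ through the accumulation weights $\beta_{k,i}$, peels off the $i=k$ term as $-\beta_{k,k}\gap(\bx_k)$, and uses the ratio $\beta_{k,i}/\beta_{k-1,i}=1-\beta_k$ to reconstruct $(1-\beta_k)\bz_k^T(\bar\bs_{k-1}-\bx_k)$ before applying the $\bx$-update and trading gradients; both paths land on the identical recursion $\bg_k\le \mu_k\bg_{k-1}+4D^2L\gamma_{k-1}\mu_k-\beta_{k,k}\gap(\bx_k)$ with $\mu_k=(1-\beta_k)(1-\gamma_{k-1})$, and the subsequent unrolling, the bound $\prod_j(1-\gamma_{j-1})\le\big(\tfrac{i+c}{k+c}\big)^c$, and the integral comparison that produces $C_1=c^p\big(1+\tfrac{1}{(c+1)(1-p)-1}\big)$ are the same in both.
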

Importantly, $C_1$ is finite only if $p < 1$. When $p = 1$, the right hand side is at best bounded by a constant, and does not decay, which makes it impossible to show method convergence.

\begin{proof}
Define $\bz_k = \nabla f(\bx_k)$, $\bg_k = \bz_{k}^T(\bar\bs_{k}-\bx_{k})$.  Then 

\begin{eqnarray*}
\bg_k &=&\underbrace{\beta_{k,k} \bz_k^T(\bs_k-\bx_k) }_{-\beta_{k,k}\gap(\bx_k)} 
+ \underbrace{\sum_{i=0}^{k-1}\beta_{k,i}\bz_k^T(\bs_i-\bx_k)}_{A}\\
A&=& \sum_{i=0}^{k-1}\underbrace{\frac{\beta_{k,i}}{\beta_{k-1,i}}}_{=(1-\frac{c}{(c+i)^p})}\beta_{k-1,i} \bz_k^T(\bs_i-\bx_k)\leq  (1-\frac{c^p}{(c+k)^p}) \underbrace{\bz_k^T(\bar\bs_{k-1}-\bx_k)}_{=B}  \\
B&=&  \bz_k^T(\bar\bs_{k-1}-\underbrace{(\bx_{k-1}+\gamma_{k-1}(\bar\bs_{k-1}-\bx_{k-1}))}_{\bx_{k}}) \\
&=& (1-\gamma_{k-1})\bz_{k}^T(\bar\bs_{k-1}-\bx_{k-1})\\
&=& (1-\gamma_{k-1})(\bz_{k}-\bz_{k-1})^T\underbrace{(\bar\bs_{k-1}-\bx_{k-1})}_{(\bx_k-\bx_{k-1})\gamma_{k-1}^{-1}}
+ (1-\gamma_{k-1})\underbrace{\bz_{k-1}^T(\bar\bs_{k-1}-\bx_{k-1})}_{\bg_{k-1}} \\
&=& \frac{(1-\gamma_{k-1})}{\gamma_{k-1}} \underbrace{(\bz_{k}-\bz_{k-1})^T (\bx_k-\bx_{k-1})}_{\leq L\|\bx_k-\bx_{k-1}\|_2^2}
+ (1-\gamma_{k-1}) \bg_{k-1} \\
&\leq& (1-\gamma_{k-1})\gamma_{k-1}L \underbrace{\|\bar \bs_{k-1}-\bx_{k-1}\|_2^2}_{4D^2}
+ (1-\gamma_{k-1}) \bg_{k-1} \\
\end{eqnarray*}
Overall,
\begin{eqnarray*}
\bg_k &\leq& -\beta_{k,k}\gap(\bx_k) + 4D^2L\gamma_{k-1}(1-\gamma_{k-1})(1-\beta_k)+\underbrace{(1-\beta_k)(1-\gamma_{k-1})}_{\mu_k}\bg_{k-1}\\
&=& -\beta_{k,k}\gap(\bx_k) + 4D^2L \gamma_{k-1}\mu_k + \mu_k\bg_{k-1}\\
&=& -\beta_{k,k}\gap(\bx_k) + 4D^2L  \gamma_{k-1} \mu_k
-\beta_{k-1,k-1}\mu_k\gap(\bx_{k-1}) + 4D^2L \gamma_{k-2}\mu_{k-1}\mu_k + \mu_k\mu_{k-1}\bg_{k-2} 
\\
&=& -\sum_{i=0}^{k-1}\beta_{i,i}\gap(\bx_i)\prod_{j=i+1}^{k}\mu_j - \beta_{k,k}\gap(\bx_k) + 4D^2L\sum_{i=0}^k\gamma_{k-i}\prod_{j=i}^k\mu_j+ \prod_{j=1}^k\mu_k \bg_0
\end{eqnarray*}

Now we compute $\prod_{j=i}^k \mu_j$
\[
\prod_{j=i}^k(1-\gamma_{j-1}) = \prod_{j=i}^k\frac{j-1}{c+j-1} = \prod_{j=0}^c \frac{i-1+j}{k+j} \leq  (\frac{i+c}{k+c})^c
\]
Using $1-\frac{c^p}{(c+k)^p}\leq \exp(-(\frac{c}{c+k})^p)$, 
\begin{eqnarray*}
\log(\prod_{j=i}^k(1-\frac{c^p}{(c+j)^p})) \leq -\sum_{j=i}^k (\frac{c}{c+j})^p \leq  -\int_{i}^k (\frac{c}{c+j})^p dj 
=  \frac{c^p}{p+1} ((c+i)^{p+1}-(c+k)^{p+1})
\end{eqnarray*}
and therefore
\[
\prod_{j=i}^k(1-\frac{c^p}{(c+k)^p}) \leq  \frac{\exp(\frac{c^p}{p+1} (c+i)^{p+1})}{\exp(\frac{c^p}{p+1}(c+k)^{p+1})}
\]
which means
\[
\prod_{j=i}^k\mu_j \leq (\frac{i+c}{k+c})^c \exp(\frac{c^p}{p+1} ((c+i)^{p+1}-(c+k)^{p+1})).
\]
Now we bound the constant term coefficient.
\begin{eqnarray*}
\sum_{i=0}^k \gamma_{i-1}\prod_{j=i}^k\mu_j &\leq&
\sum_{i=0}^k\underbrace{\frac{c}{(c+i-1)}  (\frac{i+c}{k+c})^c}_{\text{max at $i=0$}} \underbrace{\exp(\frac{c^p}{p+1} ((c+i)^{p+1}-(c+k)^{p+1}))}_{\leq \frac{1}{\left(k-i\right)^{\left(c+1\right)\left(1-p\right)}}}\\
&\overset{C-S}{\leq}&\frac{c}{c-1}\frac{c^c}{(k+c)^c}  \sum_{i=0}^{k-1}  \frac{1}{\left(k-i\right)^{(c+1)(1-p)}}\\
&\leq&\frac{c}{c-1}\frac{c^c}{(k+c)^c}  \frac{1}{(c+1)(1-p)-1} \frac{1}{(1 - k)^{ (c+1)(1-p)-1}}\\
&\leq& \frac{C_1}{(k+c)^p}\frac{1}{(1 - k)^{ (c+1)(1-p)-1}}
\end{eqnarray*}
where $(*)$ if $c$ is chosen such that $(c+1)(1-p) > 1$ and $C_1>0$ big enough. Note that necessarily, $p < 1$, and the size of $C_1$ depends on how close $p$ is to 1.

Also, to simplify terms,
\[
\prod_{j=i}^k\mu_j \leq (\frac{i+c}{k+c})^c \underbrace{\exp(\frac{c^p}{p+1} ((c+i)^{p+1}-(c+k)^{p+1})).}_{\leq 1}
\]

Now, we can say
\[
\bg_k \leq -\sum_{i=0}^{k-1} \beta_{i,i} (\frac{i+1+c}{k+c})^c \gap(\bx_i)   - \beta_{k,k}\gap(\bx_k)+  \frac{4D^2L C_1}{(k+c)^p} + (\frac{c}{k+c})^c \bg_0.
\]

\end{proof}

\begin{theorem}[Global rate, $p < 1$.]
\label{th:globalrate}
Suppose $0 < p < 1$ and 
$c \geq \frac{c-1}{c^p}$. Then
 $h(\bx_k) =: \bh_k = O(\tfrac{1}{(k+c)^p})$.
\end{theorem}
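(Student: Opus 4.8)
The plan is to combine the energy-decay bound from Lemma~\ref{lem:discrete_energy} with the smoothness recursion on the objective error, and then verify a candidate rate by induction. The starting point is the $L$-smoothness inequality, which in the averaged setting reads
\[
\bh_{k+1}-\bh_k \leq \gamma_k \bg_k + \frac{L\gamma_k^2}{2}\|\bar\bs_k-\bx_k\|_2^2 \leq \gamma_k \bg_k + 2LD^2\gamma_k^2,
\]
where $\bg_k = \nabla f(\bx_k)^T(\bar\bs_k-\bx_k)$. The role of Lemma~\ref{lem:discrete_energy} is to control $\bg_k$; the crucial feature there is that the accumulated constant term decays like $O(1/(k+c)^p)$ rather than staying bounded, precisely because $p<1$ makes the relevant sum summable. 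So after plugging in, one has $\gamma_k\bg_k \leq -\gamma_k\beta_{k,k}\gap(\bx_k) - \gamma_k\sum_i(\cdots)\gap(\bx_i) + O(\gamma_k/(k+c)^p) + O(\gamma_k^2)$, and since $\gap(\bx_i)\geq \bh_i \geq 0$, all the gap terms are $\leq 0$ and can be either retained (to drive the decrease) or discarded (as a crude bound), depending on what the induction needs.

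The core of the argument is the induction. I would posit the candidate $\bh_k \leq \frac{C_3}{(k+c)^p}$ and verify that the recursion preserves it. Retaining the dominant gap term $-\gamma_k\beta_{k,k}\gap(\bx_k) \leq -\gamma_k\beta_{k,k}\bh_k = -\frac{c}{k+c}\cdot\frac{c^p}{(c+k)^p}\bh_k$, the recursion takes the schematic form
\[
\bh_{k+1} \leq \left(1 - \frac{c^{1+p}}{(c+k)^{1+p}}\right)\bh_k + \frac{C'}{(k+c)^{1+p}} + \frac{C''}{(k+c)^2}.
\]
The forcing terms are both $o(1/(k+c)^p)$ relative to the contraction factor $\frac{c^{1+p}}{(c+k)^{1+p}}$, so a standard decaying-sequence lemma gives $\bh_k = O(1/(k+c)^p)$; concretely one checks that $\frac{C_3}{(k+1+c)^p}$ is reproduced by bounding $\frac{1}{(k+1+c)^p}\geq \frac{1}{(k+c)^p}(1 - \frac{p}{k+c})$ and matching coefficients, which is where the stated condition $c \geq \frac{c-1}{c^p}$ (ensuring the contraction dominates) enters.

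The main obstacle I anticipate is not the final induction but the careful bookkeeping of the \emph{weighted sum} of past gaps in Lemma~\ref{lem:discrete_energy}: the term $-\sum_{i=0}^{k-1}\beta_{i,i}(\frac{i+1+c}{k+c})^c\gap(\bx_i)$ couples the current error to the entire history, so one cannot treat the recursion as a simple one-step contraction without first arguing that this historical sum is either negligible or helpful. The clean way around this is to discard that sum entirely (it is nonpositive) and lean only on the single current term $\beta_{k,k}\gap(\bx_k)$ for the contraction, using the decaying constant $\frac{4D^2LC_1}{(k+c)^p}$ as the forcing term; the price is that one must then confirm that $\gamma_k\cdot\frac{4D^2LC_1}{(k+c)^p} = O(1/(k+c)^{1+p})$ is indeed consistent with the target rate, which it is. A secondary subtlety is that $\bg_k$ is a gap-like quantity bounded above by $-\beta_{k,k}\gap(\bx_k)+(\text{positive decaying terms})$, so one must be careful that $\bg_k$ need not itself be negative at every step; the induction must be robust to transient positivity of $\bg_k$, which the forcing-term formulation handles automatically.
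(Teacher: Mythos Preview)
Your overall architecture (smoothness $+$ Lemma~\ref{lem:discrete_energy} $+$ induction on a candidate rate) matches the paper, but the decision to discard the historical sum $-\sum_{i<k}\beta_{i,i}\bigl(\tfrac{i+1+c}{k+c}\bigr)^c\gap(\bx_i)$ is a genuine gap, not a simplification. After you drop it, the only negative term left is $-\gamma_k\beta_{k,k}\bh_k$, giving a one-step contraction factor $a_k:=\gamma_k\beta_{k,k}=\tfrac{c^{1+p}}{(c+k)^{1+p}}$. Since $1+p>1$, the series $\sum_k a_k$ \emph{converges}, so $\prod_k(1-a_k)$ tends to a strictly positive limit and the recursion $\bh_{k+1}\leq(1-a_k)\bh_k+b_k$ cannot force $\bh_k\to 0$ at all, let alone at rate $1/k^p$. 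Concretely, plug your candidate $\bh_k=C_3/(k+c)^p$ into the inductive step: the contraction contributes $a_k\bh_k=O\bigl(1/(k+c)^{1+2p}\bigr)$, whereas both the target decrement $\bh_k-\bh_{k+1}\sim pC_3/(k+c)^{1+p}$ and the forcing $C'/(k+c)^{1+p}$ are of strictly larger order. The inequality
\[
\Bigl(1-\tfrac{c^{1+p}}{(c+k)^{1+p}}\Bigr)\frac{C_3}{(k+c)^p}+\frac{C'}{(k+c)^{1+p}}\;\leq\;\frac{C_3}{(k+1+c)^p}
\]
therefore fails for all large $k$, regardless of $C_3$. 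There is no ``standard decaying-sequence lemma'' that applies here: those lemmas require a non-summable contraction, and the condition $c\geq(c-1)/c^p$ does nothing to repair a summable one.

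The paper's proof does \emph{not} discard the historical sum. It retains the full term, substitutes the inductive hypothesis into every past $\bh_i$, and lower-bounds $\sum_{i<k}(c+i)^{c-2p}$ by an integral; after the prefactor $\gamma_k/(k+c)^c$ this yields a negative contribution of order $1/(k+c)^{2p}$, which for $p<1$ dominates the $1/(k+c)^{1+p}$ forcing and closes the induction. In short, the ``main obstacle'' you flagged is not a bookkeeping nuisance to be sidestepped --- it is the engine of the proof, and throwing it away is exactly what breaks your argument.
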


\begin{proof}Start with
\[
\bg_k \leq -\sum_{i=0}^{k-1} \beta_{i,i} (\frac{i+1+c}{k+c})^c \gap(\bx_i)   - \beta_{k,k}\gap(\bx_k)+  \frac{4D^2L C_1}{(k+c)^p} + (\frac{c}{k+c})^c \bg_0.
\]
\begin{eqnarray*}
\bh_{k+1}-\bh_k &\leq& \gamma_k\bg_k + 2\gamma_k LD^2\\
&\leq & -\frac{c}{c+k}\frac{1}{(k+c)^c}\sum_{i=0}^{k-1} \beta_{i,i} (i+c+1)^c  \bh_i - \frac{c}{c+k}\beta_{k,k}\bh_k\\
 &&\qquad + 2D^2L\gamma_k( \frac{2C_1}{(k+c)^p} + \gamma_k) + \frac{c}{c+k}\frac{c^c}{(k+c)^c} \bg_0 \\
 &\leq&  -\frac{c^{p+1}}{c+k}\frac{1}{(k+c)^c}\sum_{i=0}^{k-1} (i+c)^{c-p}  \bh_i - \frac{c}{c+k}\beta_{k,k}\bh_k\\
 &&\qquad + 2D^2L\gamma_k( \frac{2C_1}{(k+c)^p} + \gamma_k) + \frac{c}{c+k}\frac{c^c}{(k+c)^c} \bg_0 \\
\end{eqnarray*}

Suppose $\bh_{k} \leq \frac{C_2}{(k+c)^p}$. Then
\begin{eqnarray*}
\bh_{k+1}  -\bh_k &\leq&   \underbrace{-\frac{c^{p+1}}{c+k}\frac{C_2}{(k+c)^c}\sum_{i=0}^{k-1} (c+i)^{c-2p}    - \frac{cC_2}{c+k}\frac{c^p}{(c+k)^{2p}}}_{-AC_2}\\
 &&\qquad + \underbrace{2D^2L\frac{c}{c+k}\left( \frac{2C_1}{(k+c)^p} + \frac{c}{c+k}\right) + \frac{c}{c+k}\frac{c^c}{(k+c)^c} \bg_0}_{B} \\
 B&=& \frac{c}{c+k}\left(2D^2L\left( \frac{2C_1}{(k+c)^p} + \frac{c}{c+k}\right) + \frac{c^c}{(k+c)^c} \bg_0\right)
 \\
 &\leq &\frac{2cD^2L( 2C_1+c ) + c^{c+1} \bg_0}{(c+k)^{p}(c+k)}\\
 &=:&\frac{C_3}{(c+k)^{p+1}}\\
\end{eqnarray*}
where $c > 1$.  Then,

\begin{eqnarray*}
 (k+c)^cA &=&  \frac{c^{p+1}}{c+k}\sum_{i=0}^{k-1} (c+i)^{c-2p}    + \frac{c}{c+k}\frac{c^p}{(c+k)^{2p-1}}\\
 &\geq & \frac{c^{p+1}}{c+k} \frac{(c+k-1)^{c-2p+1}-(c+1)^{c-2p+1}}{c-2p+1} + \frac{c}{c+k} \frac{c^p}{(c+k)^{2p-1}}\\
 &= & \frac{c^{p+1}}{c-2p+1}\frac{(c+k-1)^{c-2p+1}}{c+k} + O(1/k)\\ 
 &\overset{c\geq 2p+1}{\geq} & \frac{c^{p+1}}{c-2p+1} + O(1/k)\\
 &\overset{\text{$k$ big enough}}{\geq}& \frac{c^{p+1}}{c-1}
 \end{eqnarray*}

Therefore,
\begin{eqnarray*}
\bh_{k+1}    &\leq&  \frac{C_2(1-\frac{c^{p+1}}{c-1})}{(k+c)^p} + \frac{C_3}{(c+k)^{p+1}}.
\end{eqnarray*}
Define 
$\epsilon = \frac{2c^{p+1}}{c^{p+1} + 1 - c}$
and pick $C_2 > \frac{C_3}{c\epsilon}$. By assumption, $\epsilon > 0$. Consider $k > K$ such that for all $k$, 
\[
\frac{(k+c+1)^p}{(k+c)^p} \leq 1+\frac{\epsilon}{2}, \qquad \frac{C_3}{c+k} \leq \frac{\epsilon}{2}\frac{(c+k)^p}{(c+k+1)^p}. 
\]
Then
\begin{eqnarray*}
\bh_{k+1}    &\leq&  \frac{C_2(1-\tfrac{\epsilon}{2})}{(k+c+1)^p} + \frac{\tfrac{\epsilon}{2}}{(k+c+1)^p} \leq \frac{C_2}{(k+c+1)^p}.
\end{eqnarray*}
We have now proved the inductive step.
Picking $C_2 \geq \bh_0$ gives the starting condition, completing the proof.

\end{proof}

\section{Local rates}
\label{app:sec:local}
\begin{lemma}[Local convergence]
%Suppose that the manifold has been identified at some finite time $\bar t$.
%Then, for all $t \geq \bar t$, decompose
%\[
%\bar s(t) = \underbrace{\int_0^{\bar t} \beta_{t,\tau} s(\tau)  d\tau}_{=:\bar s_1(t)} + \underbrace{\int_{\bar t}^t \beta_{t,\tau} s(\tau)  d\tau}_{=:\bar s_2(t)}.
%\]
Define, for all  $t$,
%\[
%\tilde s(t) = \argmin{s\in \conv(\mS(\bx^*))}\|s-\bar s_2(t)\|_2, \qquad \hat s(t) = \bar \beta_t^{-1}\int_{0}^t \beta_{t,\tau} \tilde s(\tau) d\tau.
%\]
\begin{equation}
\tilde s(t) = \argmin{\tilde s\in \conv(\mS(\bx^*))}\|s(t)-\tilde s\|_2, \qquad \hat s(t) = \bar \beta_t^{-1}\int_{0}^t \beta_{t,\tau} \tilde s(\tau) d\tau.
\label{eq:def-shat}
\end{equation}
e.g., $\hat s(t)$ is the closest point in the convex hull of the support of $\bx^*$ to the the point $s(t) = \lmo_\mD(x(t))$.

Then, 
\[
\|\bar s(t) - \hat s(t)\|_2 \leq \frac{c_1}{(c+t)^c}.
\]
\end{lemma}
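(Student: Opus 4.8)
The plan is to exploit the fact that manifold identification confines the disagreement between the oracle output $s(t)$ and its projection $\tilde s(t)$ to a bounded initial time window, while the averaging kernel $\beta_{t,\tau}$ places vanishingly small weight on that window as $t$ grows. Let $\bar t$ denote the finite manifold identification time, so that for every $\tau \geq \bar t$ the point $s(\tau) = \lmo_\mD(x(\tau))$ already lies in $\mS(\bx^*) \subseteq \conv(\mS(\bx^*))$. Projecting onto a convex set a point that is already contained in it is the identity, so $\tilde s(\tau) = s(\tau)$ for all $\tau \geq \bar t$; this is the single structural fact that drives the whole argument.

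First I would invoke the accumulation lemma to put both averaged quantities in integral form against the \emph{same} kernel: $\bar s(t) = \int_0^t \beta_{t,\tau}\,s(\tau)\,d\tau$ and $\bar\beta_t\,\hat s(t) = \int_0^t \beta_{t,\tau}\,\tilde s(\tau)\,d\tau$. The only delicacy is that $\bar s$ is built from the unnormalized kernel (its total mass is $\bar\beta_t < 1$) while $\hat s$ carries the explicit factor $\bar\beta_t^{-1}$, so I would split the error into a numerator part and a normalization part,
\[
\bar s(t) - \hat s(t) = \underbrace{\int_0^t \beta_{t,\tau}\big(s(\tau) - \tilde s(\tau)\big)\,d\tau}_{(\mathrm{I})} \;+\; \underbrace{(\bar\beta_t - 1)\,\hat s(t)}_{(\mathrm{II})},
\]
and bound each term at rate $1/(c+t)^c$. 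For term $(\mathrm I)$, the integrand vanishes on $[\bar t,t]$, so the integral collapses to $\int_0^{\bar t}$; bounding $\|s(\tau)-\tilde s(\tau)\|_2 \leq 2D$ and evaluating $\int_0^{\bar t}\beta_{t,\tau}\,d\tau$ from the closed forms gives, for $p=1$, exactly $\tfrac{(c+\bar t)^c - c^c}{(c+t)^c}$, and for $p<1$ the faster-than-polynomial quantity $e^{\alpha(\bar t)-\alpha(t)} - e^{\alpha(0)-\alpha(t)}$. For term $(\mathrm{II})$, $\hat s(t)$ is a convex combination of points of $\mD$ so $\|\hat s(t)\|_2 \leq D$, and $1-\bar\beta_t$ equals $(c/(c+t))^c$ when $p=1$ and $e^{\alpha(0)-\alpha(t)}$ when $p<1$. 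Summing the two bounds and absorbing the $\bar t$- and $D$-dependent constants into a single $c_1$ yields the claim.

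The main obstacle I anticipate is the bookkeeping around the unnormalized kernel: one must resist treating $\bar s(t)$ as an exact average, because the normalization gap $(\mathrm{II})$ is genuinely nonzero and must be shown to decay at the \emph{same} $1/(c+t)^c$ rate rather than being neglected. A secondary concern is verifying that the $p<1$ kernel integrals dominate $1/(c+t)^c$ for \emph{all} $t$, not merely asymptotically; since the governing factor is $e^{-\alpha(t)} = \exp\!\big(-c^p(c+t)^{1-p}/(1-p)\big)$, which eventually beats every polynomial, this reduces to choosing $c_1$ large enough to cover the finite initial range of $t$.
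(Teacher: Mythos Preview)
Your argument is correct and rests on the same two facts the paper uses---the kernel mass on $[0,\bar t]$ is $O((c+t)^{-c})$ and the normalization defect $1-\bar\beta_t$ is $O((c+t)^{-c})$---but you organize the algebra differently. The paper writes $\bar s(t)-\hat s(t)=\int_0^t\beta_{t,\tau}\bigl(s(\tau)-\bar\beta_t^{-1}\tilde s(\tau)\bigr)\,d\tau$ and then splits the \emph{time interval} at $\bar t$, so its post-identification piece $\epsilon_2$ is not zero but carries the factor $1-\bar\beta_t^{-1}$ that must be bounded separately. You instead peel off the normalization first, writing $\bar s-\hat s=(\mathrm I)+(\mathrm{II})$ with $(\mathrm I)=\int_0^t\beta_{t,\tau}(s-\tilde s)$ and $(\mathrm{II})=(\bar\beta_t-1)\hat s$; this makes $(\mathrm I)$ vanish identically on $[\bar t,t]$ and isolates the normalization error as a single term bounded by $D\,(1-\bar\beta_t)$. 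Your decomposition is slightly cleaner---it separates the ``wrong support'' error from the ``wrong total mass'' error rather than intertwining them---and it avoids the paper's somewhat awkward bound on $|1-\bar\beta_t^{-1}|$ in favor of the direct $1-\bar\beta_t=(c/(c+t))^c$. Both routes yield the same $c_1/(c+t)^c$ rate with constants depending on $D$, $\bar t$, and $c$.
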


The proof of this theorem actually does not really depend on how well the FW method works inherently, but rather is a consequence of the averaging. Intuitively, the proof states that after the manifold has been identified, all new support components must also be in the convex hull of the support set of the \emph{optimal} solution; thus, in fact $s_2(t) - \hat s(t) = 0$. However, because the accumulation term in the flow actually is not a true average until $t\to +\infty$, there is a pesky normalization term which must be accounted for. Note, importantly, that this normalization term does not appear in the method, where the accumulation weights always equal 1 (pure averaging).

\begin{proof}
First, note that 
\[
\bar s(t) - \hat s(t) =\bar s(t) - \bar \beta_t^{-1} \int_0^t \beta_{t,\tau} \tilde s(\tau) d\tau = \int_0^t \beta_{t,\tau} (s(\tau) - \bar\beta_t^{-1} \tilde s(\tau)) d\tau.
\]
Using triangle inequality,
\[
\|\bar s(t) - \hat s(t)\|_2 \leq \underbrace{ \|\int_0^{\bar t} \beta_{t,\tau} (s(\tau) - \bar\beta_t^{-1} \tilde s(\tau)) d\tau\|_2}_{\epsilon_1} + \underbrace{\|\int_{\bar t}^t \beta_{t,\tau} (s(\tau) - \bar\beta_t^{-1} \tilde s(\tau)) d\tau\|_2}_{\epsilon_2}.
\]

Expanding the first term, 
via Cauchy Scwhartz for integrals, we can write, elementwise,
\[
\int_0^{\bar t} \beta_{t,\tau} (s(\tau)_i - \bar\beta_t^{-1} \tilde s(\tau)_i) d\tau   \leq \int_0^{\bar t} \beta_{t,\tau}  d\tau \int_0^{\bar t} |s(\tau)_i - \bar\beta_{ t}^{-1} \tilde s(\tau)_i| d\tau
\]
and thus, 
\[
\|\int_0^{ t} \beta_{t,\tau} (s(\tau) - \bar\beta_t^{-1} \tilde s(\tau)) d\tau\|   \leq \int_0^{\bar t} \beta_{t,\tau}   d\tau \underbrace{\|\int_0^{ t} s(\tau)_i - \bar\beta_t^{-1} \tilde s(\tau)_i d\tau\|_2}_{\leq 2 D (1+\bar \beta_{t}^{-1}) \bar t} .
\]
and moreover,
\[
\int_0^{\bar t} \beta_{t,\tau} d\tau = \int_0^{\bar t} \frac{c(c+\tau)^{c-1}}{(c+t)^c} d\tau = \frac{\hat c_0}{(c+t)^c}
\]
since $\int_0^{\bar t} b(b+\tau)^{b-1}  d\tau$ does not depend on $t$.
Thus the first error term 
\[
\epsilon_1 \leq \frac{2\hat c_0 D \bar t (1+\bar \beta_t^{-1}) }{(c+t)^c} \leq \frac{c_0}{(c+t)^c}
\]
where
\[
\hat c_0 :=  2D\bar t \int_0^{\bar t} c(c+\tau)^{c-1} d\tau.
\]

In the second error term, \emph{because the manifold has now been identified}, $s(\tau)  = \tilde s(\tau)$, and so 
\[
\int_{\bar t}^t \beta_{t,\tau} (s(\tau) - \bar\beta_t^{-1} \tilde s(\tau)) d\tau   
=
\int_{\bar t}^t \beta_{t,\tau}(1-\bar \beta_t^{-1}) s(\tau)  d\tau   
\]
and using the same Cauchy-Schwartz argument, 
\[
\int_{\bar t}^t \beta_{t,\tau}(1-\bar \beta_t^{-1}) s(\tau)  d\tau \leq D\int_{\bar t}^t \beta_{t,\tau}(1-\bar \beta_t^{-1})    d\tau.
\]
The term
\[
1-\bar \beta_t^{-1} = |1-\frac{1}{1-(\frac{c}{c+t})^c}| =\frac{c^c}{(c+t)^c-c^c}\leq \frac{2c^2 }{(c+t)^{c}}
\] 
and thus
\[
\epsilon_t \leq \int_{\bar t}^t \beta_{t,\tau}(1-\bar\beta_t^{-1}) d\tau \leq \frac{2 c^3}{(c+t)^{2c}}\int_{\bar t}^t (c+\tau)^{c-1}  = 
\frac{2c^2 }{(c+t)^{2c}}( (c+t)^c - (c+\bar t)^c)  \leq \frac{2c^2}{(c+t)^c}.
\]
Thus,
\[
\|\bar s(t)-\hat s(t)\|_2  \leq \frac{\hat c_0+2c^2}{(c+t)^c} = O(\frac{1}{(c+t)^c}).
\]

\end{proof}

\begin{corollary}[Local flow rate]
Suppose that for all $x\in \mD$, $\|\nabla f(x)\|_2 \leq G$ for some $G$ large enough. 
Consider $\gamma(t) = \beta(t) = \frac{c}{c+t}$. Then the ODE
\[
\dot h(x(t)) = \gamma(t)\nabla f(x)^T(\bar s - x)
\]
has solutions 
  $h(t) = O(\frac{\log(t)}{(c+t)^c})$
   when $t \geq \bar t$.

\end{corollary}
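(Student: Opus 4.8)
The plan is to feed the decomposition $\dot h = \gamma(t)(A+B)$, with $A = \nabla f(x(t))^T(\bar s(t)-\hat s(t))$ and $B = \nabla f(x(t))^T(\hat s(t)-x(t))$, into a scalar linear differential inequality for $h$ and solve it with an integrating factor. First I would bound the averaging residual $A$: by Cauchy--Schwarz and the bounded-gradient hypothesis,
\[
A \le \|\nabla f(x(t))\|_2\,\|\bar s(t)-\hat s(t)\|_2 \le \frac{G\,c_1}{(c+t)^c},
\]
where the last inequality is exactly the preceding Lemma bounding $\|\bar s(t)-\hat s(t)\|_2$. This is the only place the assumption $\|\nabla f(x)\|_2\le G$ enters.

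Next I would handle the descent term $B$. After the manifold has been identified, $\tilde s(\tau)=s(\tau)$ for $\tau\ge \bar t$, so $\hat s(t)$ is (up to the normalization factor already absorbed into $A$) an average of genuine oracle points lying in $\conv(\mS(x^*))$, and the aim is to show $B = -\gap(x(t))$. Combining with the standard duality-gap bound $\gap(x(t)) \ge f(x(t))-f(x^*) = h(t)$ then gives $B \le -h(t)$. Together with the bound on $A$, this yields
\[
\dot h \;\le\; \gamma(t)\!\left(\frac{G\,c_1}{(c+t)^c} - h\right) \;=\; \frac{c}{c+t}\!\left(\frac{G\,c_1}{(c+t)^c}-h\right).
\]

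Finally I would solve this inequality. For $\gamma(t)=c/(c+t)$ the integrating factor is $\mu(t)=\exp\big(\int_{\bar t}^{t}\gamma(\tau)\,d\tau\big)=(c+t)^c$ (up to a constant), so
\[
\frac{d}{dt}\big[(c+t)^c h(t)\big] \;\le\; (c+t)^c\,\gamma(t)\,\frac{G\,c_1}{(c+t)^c} \;=\; \frac{c\,G\,c_1}{c+t}.
\]
Integrating from $\bar t$ to $t$ produces the logarithm: the forcing term decays at exactly the homogeneous rate $1/(c+t)^c$, so the two resonate, giving
\[
(c+t)^c h(t) \;\le\; (c+\bar t)^c h(\bar t) + c\,G\,c_1\,\log\!\frac{c+t}{c+\bar t},
\]
and hence $h(t) = O\big(\log(t)/(c+t)^c\big)$, as claimed.

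The main obstacle is not the ODE integration (routine) but justifying $B \le -h(t)$: one must use manifold identification to argue that $\hat s(t)$, being a convex average of active oracle points in $\conv(\mS(x^*))$, behaves like a linear-minimization oracle at $x(t)$, so that $\nabla f(x(t))^T(\hat s(t)-x(t)) \le -\gap(x(t))$. The delicate point is that the points averaged into $\hat s(t)$ minimize the linearized objective at \emph{earlier} times $\tau$ rather than at the current $\nabla f(x(t))$; closing this requires exploiting that $\nabla f(x^*)$ is constant over $\conv(\mS(x^*))$ and that $x^*\in\conv(\mS(x^*))$, so that the residual $(\nabla f(x(t))-\nabla f(x^*))^T(\hat s(t)-x^*)$ is controlled by $L\|x(t)-x^*\|_2\,\diam(\mD)$. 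This is where the convergence $x(t)\to x^*$ re-enters, and it is the step I expect to require the most care.
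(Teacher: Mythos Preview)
Your proposal is correct and follows essentially the same route as the paper: decompose $\dot h$ via $\hat s$, bound $A$ by Cauchy--Schwarz together with the preceding Lemma, bound $B$ by $-\gap(x(t))\le -h(t)$, and integrate the resulting linear differential inequality with the integrating factor $(c+t)^c$ to pick up the logarithm.

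Your final paragraph, however, overcomplicates the justification of $B\le -h(t)$; the paper handles it more directly and you do not need the residual argument involving $L\|x(t)-x^*\|_2\,\diam(\mD)$. The key observation is that, by construction, $\tilde s(\tau)\in\conv(\mS(x^*))$ for \emph{every} $\tau$ (not only $\tau\ge\bar t$), so $\hat s(t)\in\conv(\mS(x^*))$ always. After manifold identification the paper invokes $\mS(x(t))=\mS(x^*)$: this says that every vertex of $\mS(x^*)$ is an exact LMO with respect to the \emph{current} gradient $\nabla f(x(t))$, not merely with respect to $\nabla f(x^*)$. Since the LMO value is linear, any convex combination of these vertices is also an exact LMO at $x(t)$, giving $\nabla f(x(t))^T(\hat s(t)-x(t)) = -\gap(x(t))$ with equality. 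There is no ``earlier time'' mismatch to repair, and no separate appeal to $x(t)\to x^*$ is required at this step.
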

\begin{proof}
First, we rewrite the ODE in a more familiar way, with an extra error term
\[
\dot h(x(t)) = \gamma(t)\nabla f(x)^T(\bar s - \hat s) +  \gamma(t)\nabla f(x)^T(\hat s - x)
\]
where $\hat s$ is as defined in \eqref{eq:def-shat}. By construction, $\hat s$ is a convex combination of $\tilde s\in \mS(\bx^*)$. Moreover, after $t \geq \bar t$, $\mS(\bar x(t)) = \mS(\bx^*)$, and thus 
\[
\nabla f(x)^T(\hat s(t) - x) = \nabla f(x)^T(s(t) - x) = -\gap(t) \leq -h(t).
\]
Then, using Cauchy-Schwartz, and piecing it together,
\[
h(t) = \nabla f(x)^T(\hat s(t) - x) \leq G\gamma(t)\|\bar s - \hat s\|_2 - \gamma(t) h(t) \leq \frac{G\gamma(t)  c_1}{(c+t)^c} - \gamma(t) h(t).
\]

 Let us therefore consider the system
  \[
  \dot h(x(t)) = \frac{2 G D \gamma(t) }{(c+t)^c}   - \gamma(t) h(x(t)) .
\]
The solution to this ODE is
 \[
 h(t) = \frac{h(0)c^c + 2GDc \log(c+t) - 2GDc\log(c)}{(c + t)^c} = O(\frac{\log(t)}{(c+t)^c}).
 \]

\end{proof}

\begin{lemma}[Local  averaging error]
\label{lem:local_avgerror}
Define, for all  $k$,
%\[
%\tilde s(t) = \argmin{s\in \conv(\mS(\bx^*))}\|s-\bar s_2(t)\|_2, \qquad \hat s(t) = \bar \beta_t^{-1}\int_{0}^t \beta_{t,\tau} \tilde s(\tau) d\tau.
%\]
\[
\tilde \bs_k = \argmin{\tilde \bs\in \conv(\mS(\bx^*))}\|\bs_k-\tilde \bs\|_2, \qquad \hat \bs_k = \bar \beta_k^{-1}\sum_{i=1}^k \beta_{k,i} \tilde \bs_i.
\]
e.g., $\tilde \bs(k)$ is the closest point in the convex hull of the support of $\bx^*$ to the the point $\bs_k = \lmo_\mD(\bx_k)$.

Then, 
\[
\|\bar \bs_k - \hat \bs_k\|_2 \leq \frac{c_2}{k^c}.
\]
\end{lemma}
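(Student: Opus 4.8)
The plan is to mirror the continuous-time argument of the preceding lemma, but to exploit a simplification available only in the discrete setting: by the discrete accumulation lemma of Appendix~\ref{app:sec:accumulation}, the averaging weights satisfy $\sum_{i=1}^k \beta_{k,i} = 1$, so the normalization is exactly $\bar\beta_k = 1$ and the awkward defect factor $1-\bar\beta_t^{-1}$ that appeared in the flow (and forced the $\epsilon_2$ estimate there) vanishes entirely. With $\bar\beta_k = 1$ we have $\hat\bs_k = \sum_{i=1}^k \beta_{k,i}\tilde\bs_i$ and $\bar\bs_k = \sum_{i=1}^k \beta_{k,i}\bs_i$, so that
\[
\bar\bs_k - \hat\bs_k = \sum_{i=1}^k \beta_{k,i}(\bs_i - \tilde\bs_i).
\]
The entire estimate then reduces to controlling this single weighted sum of oracle--projection residuals.

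First I would invoke manifold identification to truncate the sum. For every $i > \bar k$ we have $\bs_i = \lmo_\mD(\bx_i) \in \mS(\bx^*) \subseteq \conv(\mS(\bx^*))$, so $\tilde\bs_i$, being the projection of $\bs_i$ onto $\conv(\mS(\bx^*))$, equals $\bs_i$ and the residual $\bs_i - \tilde\bs_i$ vanishes. Hence only the finitely many terms $i \leq \bar k$ survive, and the triangle inequality together with $\|\bs_i - \tilde\bs_i\|_2 \leq \diam(\mD) \leq 2D$ gives
\[
\|\bar\bs_k - \hat\bs_k\|_2 \leq 2D \sum_{i=1}^{\bar k}\beta_{k,i}.
\]

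Next I would bound each surviving weight. Specializing the closed form of the accumulation lemma to $p=1$, $\beta_{k,i} = \tfrac{c}{c+i}\cdot\tfrac{k!\,(c+i)!}{i!\,(c+k)!}$, and since $\tfrac{k!}{(c+k)!} = \prod_{j=1}^{c}\tfrac{1}{k+j} \leq k^{-c}$ (a product of $c$ factors each at least $k$), each fixed-$i$ weight decays like $\beta_{k,i} \leq \tfrac{c\,(c+i-1)!}{i!}\,k^{-c}$. Summing over the finite range $i \leq \bar k$ collapses all $k$-dependence into the prefactor $k^{-c}$, leaving a constant $c_2 := 2D\,c\sum_{i=1}^{\bar k}\tfrac{(c+i-1)!}{i!}$ depending only on $\bar k$ and $c$, and yields $\|\bar\bs_k - \hat\bs_k\|_2 \leq c_2/k^c$ as claimed.

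The issue to watch, more a matter of bookkeeping than a genuine obstacle, is that the constant $c_2$ legitimately depends on the finite, fixed manifold-identification time $\bar k$ (exactly as $\hat c_0$ did in the continuous proof), which is harmless since $\bar k$ is a problem-dependent constant. I would also remark that $p=1$ is the binding case: for $0<p<1$ the products $\prod_{j}(1-c^p/(c+j)^p)$ decay stretched-exponentially in $k$, as in the bounds used in Lemma~\ref{lem:discrete_energy}, so the identical truncation argument gives a decay at least as fast as $1/k^c$.
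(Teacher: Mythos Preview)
Your proposal is correct and follows essentially the same approach as the paper's own proof: both use $\bar\beta_k=1$ to write $\bar\bs_k-\hat\bs_k=\sum_i\beta_{k,i}(\bs_i-\tilde\bs_i)$, truncate the sum at $\bar k$ via manifold identification, bound each residual by $2D$, and then show each surviving weight $\beta_{k,i}$ is $O(1/k^c)$ for fixed $i$. The only cosmetic difference is that you bound the weights via the factorial form $\tfrac{k!\,(c+i)!}{i!\,(c+k)!}$ while the paper bounds the telescoping product $\prod_{j}\tfrac{i-1+j}{c+k-j}$ directly; your added remark on the $p<1$ case is a nice bonus not spelled out in the paper.
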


\begin{proof}
First, note that 
\[
\bar \bs_k - \hat \bs_k =\bar \bs_k -  \sum_{i=1}^k \beta_{k,i} \tilde \bs_i = \sum_{i=1}^k  \beta_{k,i} (\bs_i -  \tilde \bs_i).
\]
Using triangle inequality,
\[
\|\bar \bs_k - \hat \bs_k\|_2 \leq \underbrace{ \|\sum_{i=1}^{\bar k} \beta_{k,i} (\bs_i -  \tilde \bs_i)) \|_2}_{\epsilon} + \underbrace{\|\sum_{i=\bar k}^k \beta_{k,i} (\bs_i -  \tilde \bs_i) \|_2}_{0}.
\]
where the second error term is 0 since the manifold has been identified, so $\tilde \bs_i = \bs_i$ for all $i \geq \bar k$.

Expanding the first term,  using a Holder norm (1 and $\infty$ norm) argument,
\begin{eqnarray*}
\|\sum_{i=1}^{\bar k} \beta_{k,i} (\bs_i -  \tilde \bs_i)  \|_2     &\leq& 2D\sum_{i=1}^{\bar k} \beta_{k,i} \\
&=&  2D\sum_{i=1}^{\bar k} \frac{c}{(c+i)}\prod_{j=0}^{ k - i - 1}(1-\frac{c}{(c+k-j)}) \\
&=& 2D\sum_{i=1}^{\bar k} \frac{c}{(c+i)} \prod_{j=0}^c \frac{i-1+j}{c+k-j}\\
&\leq& 2D (\frac{\bar k - 1 + c}{k})^c\sum_{i=1}^{\bar k} \frac{c}{(c+i)}= O(1/k^c).
\end{eqnarray*}
\end{proof}

\begin{corollary}[Local convergence rate bounds]
\label{lem:local_convratebnd}
Suppose that for all $\bx\in \mD$, $\|\nabla f(\bx)\|_2 \leq G$ for some $G$ large enough. Define also $r$ the decay constant of $\|\bar\bs_k-\bx_k\|^2_2$ ($=O(1/k^r)$).   
Consider $\gamma_k = \beta_k = \frac{c}{c+k}$. Then the difference equation
\[
\bh(\bx_{k+1})-\bh(\bx_k) \leq \gamma_k\nabla f(\bx)^T(\bar \bs_k - \bx_k) + \frac{C}{k^r}
\]
is satisfied with candidate solution 
 $\bh(\bx_k) = C_4 \max\{\frac{\log(k)}{(c+k)^c},\frac{1}{k^{r+1}}\}$
   when $k \geq \bar k$.

\end{corollary}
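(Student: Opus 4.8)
The plan is to turn the displayed $L$-smoothness bound into a scalar recursion of the form $\bh_{k+1}-\bh_k \le -\gamma_k\bh_k + \epsilon_k$ with an explicitly controlled forcing $\epsilon_k$, and then verify the candidate by induction, exactly as in the global argument (Theorem~\ref{th:globalrate}) and its continuous local analogue. First I would split the inner product $\nabla f(\bx_k)^T(\bar\bs_k-\bx_k)$ into the two pieces $A=\nabla f(\bx_k)^T(\bar\bs_k-\hat\bs_k)$ and $B=\nabla f(\bx_k)^T(\hat\bs_k-\bx_k)$, where $\hat\bs_k$ is the averaged projection onto $\conv(\mS(\bx^*))$ from Lemma~\ref{lem:local_avgerror}. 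The quadratic remainder $\tfrac{\gamma_k^2 L}{2}\|\bar\bs_k-\bx_k\|_2^2$ is the ``$C/k^r$'' term of the statement, which I would read honestly: since $\|\bar\bs_k-\bx_k\|_2^2=O(1/k^r)$ and $\gamma_k^2=O(1/k^2)$, this remainder is of order $1/k^{r+2}$.

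Next I would bound the three pieces on the regime $k\ge\bar k$. For $A$, Cauchy--Schwarz with the bounded-gradient hypothesis gives $A\le G\|\bar\bs_k-\hat\bs_k\|_2\le Gc_2/k^c$ by Lemma~\ref{lem:local_avgerror}, so $\gamma_k A=O(1/k^{c+1})$. For $B$ I would use manifold identification: for $k\ge\bar k$ the optimal face at $\bx_k$ coincides with $\mS(\bx^*)$, and since the discrete weights satisfy $\sum_i\beta_{k,i}=1$, the point $\hat\bs_k$ is a genuine convex combination of elements of $\conv(\mS(\bx^*))$ (no normalization correction, unlike the flow). Every such point minimizes $\nabla f(\bx_k)^T(\cdot)$ over $\mD$, hence $\nabla f(\bx_k)^T\hat\bs_k=\nabla f(\bx_k)^T\bs_k$ and $B=-\gap(\bx_k)\le-\bh_k$. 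For the quadratic remainder I would invoke the discrete averaging corollary, which under $\mu$-strong convexity and the global rate $f(\bx_k)-f^*=O(1/k^q)$ yields $\|\bar\bs_k-\bx_k\|_2^2=O(1/k^r)$ with $r=\min\{q/2+p-1,2p\}$. Collecting everything gives $\bh_{k+1}-\bh_k\le-\gamma_k\bh_k+O(1/k^{c+1})+O(1/k^{r+2})$.

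Finally I would verify the candidate $\bh_k=C_4\max\{\log(k)/(c+k)^c,\,1/k^{r+1}\}$ by induction, treating the two branches of the maximum separately. On the $1/k^{r+1}$ branch, substituting and using $\gamma_k\approx c/k$ shows the discrete derivative of the candidate balances the forcing $O(1/k^{r+2})$ provided $c>r+1$, mirroring the $C$-term computation in the continuous corollary. On the $\log(k)/(c+k)^c$ branch, the homogeneous factor $\prod_j(1-\gamma_j)\sim(c/(c+k))^c$ decays only one power slower than the forcing $O(1/k^{c+1})$, and it is precisely this near-critical resonance that forces the extra $\log(k)$, reproducing the flow rate $O(\log(t)/t^c)$. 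As in Theorem~\ref{th:globalrate}, I would choose $C_4$ large enough to absorb constants and the base case, and restrict to $k$ large enough that correction ratios such as $(k+c+1)^p/(k+c)^p$ stay within the tolerance needed for the inductive step to close.

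I expect the main obstacle to be the inductive step in the resonant branch: because the forcing decays only one power faster than the homogeneous solution, a pure power law cannot solve the recursion and one must propagate the $\log(k)$ factor, checking that the induced correction terms are genuinely lower order and do not inflate $C_4$ across iterations. A secondary subtlety is the rigorous justification of $B=-\gap(\bx_k)$: this requires the manifold-identification guarantee that all of $\conv(\mS(\bx^*))$ lies on the minimizing face of the current linear objective, which is stronger than merely $\bs_k\in\mS(\bx^*)$ and must be anchored to the degeneracy-parameter bound described earlier.
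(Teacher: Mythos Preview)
Your proposal is correct and follows essentially the same route as the paper: split $\nabla f(\bx_k)^T(\bar\bs_k-\bx_k)$ into the $(\bar\bs_k-\hat\bs_k)$ piece controlled by Lemma~\ref{lem:local_avgerror} and Cauchy--Schwarz, the $(\hat\bs_k-\bx_k)$ piece equal to $-\gap(\bx_k)$ after manifold identification, absorb the quadratic remainder as $O(1/k^{r+2})$, and then verify the candidate by induction, branching on whether $c\le r+1$ or $c>r+1$. Your commentary on why the $\log(k)$ appears (near-critical forcing) and on the exact manifold-identification hypothesis needed for $B=-\gap(\bx_k)$ is more explicit than the paper's own proof, but the skeleton is identical.
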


\begin{proof}
First, we rewrite the ODE in a more familiar way, with an extra error term
\[
\bh(\bx_{k+1})-\bh(\bx_k) = \gamma_k\underbrace{\nabla f(\bx_k)^T(\bar \bs_k - \hat \bs_k)}_{\leq \gamma_k G\|\bar\bs_k-\hat\bs_k\|_2} +  \gamma_k\nabla f(\bx_k)^T(\hat \bs_k - \bx_k) + \frac{C}{k^{r+2}}
\]
where $\hat \bs_k$ is as defined in \eqref{eq:def-shat}. By construction, $\hat \bs_k$ is a convex combination of $\tilde \bs_i\in \mS(\bx^*)$. Moreover, after $k \geq \bar k$, $\mS(\bar \bx_k) = \mS(\bx^*)$, and thus 
\[
\nabla f(\bx_k)^T(\hat \bs_k - \bx_k) = \nabla f(\bx_k)^T(\bs_k - \bx_k) = -\gap(\bx_k) \leq -\bh(\bx_k).
\]
Then,   piecing it together,
\[
\bh(\bx_{k+1})-\bh(\bx_k) \leq \gamma_kG\|\bar\bs-\hat\bs\|_2 -\gamma_k\bh(\bx_k) + \frac{C}{k^{r+2}} \overset{\text{Lemma \ref{lem:local_avgerror}}}{\leq } \underbrace{\gamma_k\frac{GC_2}{k^c}}_{\leq C_3/k^{c+1}} -\gamma_k\bh(\bx_k) + \frac{C}{k^{r+2}}
\]

Recursively, we can now show that for $C_4 \geq C+C_3$, if
\[
\bh(\bx_k) \leq C_4 \max\{\frac{\log(k)}{(c+k)^c},\frac{1}{k^{r+1}}\}
\]
then, 
\begin{eqnarray*}
\bh(\bx_{k+1})&\leq& \frac{C_3}{k^{c+1}}+ \frac{C}{k^{r+1}}+(1-\gamma_k) \bh(\bx_k) \\
&\leq & \frac{C_3}{k^{c+1}}+ \frac{C}{k^{r+1}}+\frac{k}{c+k}  C_4\max\{\frac{\log(k)}{(c+k)^c},\frac{1}{k^{r+1}}\}.
\end{eqnarray*}
If $c\leq r+1$ then 

\begin{eqnarray*}
\bh(\bx_{k+1})&\leq&  \frac{C+C_3}{k^c}+\frac{k}{c+k}  \frac{C_4\log(k)}{(c+k)^c}\leq \frac{k}{c+k}  \frac{C_4\log(k)}{(c+k)^c}\leq   \frac{C_4\log(k+1)}{(c+k+1)^c}
\end{eqnarray*}
for $k$ large enough. Otherwise,

\begin{eqnarray*}
\bh(\bx_{k+1})&\leq&  \frac{C+C_3}{k^{r+2}}+\frac{k}{c+k}  \frac{C_4}{k^{r+1}}\leq \frac{C_4}{(k+1)^{r+1}}.
\end{eqnarray*}
for $k$ large enough.
\end{proof}

\begin{theorem}[Local convergence rate]
Picking $c \geq 3p/2+1$, the proposed method \AvgFW~has an overall convergence $\bh(\bx_k) = O(k^{-3p/2})$.
\end{theorem}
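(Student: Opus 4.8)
The plan is to chain together the three ingredients already established: the global rate (Theorem~\ref{th:globalrate}), the discrete averaging corollary of Appendix~\ref{app:sec:averaging}, and the local convergence rate bound (Corollary~\ref{lem:local_convratebnd}). Each step feeds its output exponent into the next, and the final statement is essentially the endpoint of this pipeline once the constants line up.

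First I would invoke the global rate to fix the convergence exponent of the objective error. Theorem~\ref{th:globalrate} gives $\bh_k = O(1/(k+c)^p)$ with \emph{no} assumption on discretization decay, so I may take $q = p$ as the decay constant of $f(\bx_k) - f(\bx^*)$. This unconditional nature is the crucial feature: it lets the global rate serve as a clean, circularity-free input to the averaging bound. Second, I would feed $q = p$ into the discrete averaging corollary (which is where $\mu$-strong convexity enters). It yields $\|\bar\bs_k - \bx_k\|_2^2 \leq C\max\{k^{-(q/2+p-1)}, k^{-2p}\}$. Substituting $q=p$ produces the two competing exponents $3p/2-1$ and $2p$; since $3p/2-1 < 2p$ for every $p$, the slower-decaying term dominates and $\|\bar\bs_k - \bx_k\|_2^2 = O(1/k^{3p/2-1})$. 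Thus the discretization-decay constant of Corollary~\ref{lem:local_convratebnd} is exactly $r = 3p/2 - 1$.

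Third, I would plug $r = 3p/2 - 1$ into Corollary~\ref{lem:local_convratebnd}. The difference inequality it requires is precisely the $L$-smoothness recursion written out in the main text, with the constant $C$ controlled by the local averaging error (Lemma~\ref{lem:local_avgerror}) and the compactness of $\mD$, which together with $L$-smoothness supplies the bound $\|\nabla f\|_2 \leq G$. The corollary then certifies the candidate solution $\bh(\bx_k) = O(\max\{\log(k)/(c+k)^c,\ 1/k^{r+1}\})$. Since $r+1 = 3p/2$, the second term is $1/k^{3p/2}$; and since $c \geq 3p/2+1 > 3p/2$, the polynomial gap of at least one full power of $k$ swallows the logarithm, so $\log(k)/(c+k)^c$ is asymptotically negligible against $1/k^{3p/2}$. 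Hence $\bh(\bx_k) = O(1/k^{3p/2})$, as claimed.

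The main obstacle is not any single computation but verifying that the assorted constant conditions are simultaneously compatible, so that the chain actually closes. In particular I must confirm that $c \geq 3p/2+1$ is strong enough to satisfy the hypotheses of every earlier result it triggers---the global-rate constraints $c \geq 2p+1$ and $(c+1)(1-p) > 1$, and the energy-decay condition $c \geq q-1$ of Lemma~\ref{lem:continuous_energy}---and that the dominance of $1/k^{3p/2}$ over $\log(k)/(c+k)^c$ is uniform over all admissible $0 < p < 1$. A secondary subtlety worth flagging is that the decayed exponent $r = 3p/2 - 1$ is only positive when $p > 2/3$; for smaller $p$ the discretization term need not genuinely decay, and I would note that the advertised acceleration is realized in the regime $r > 0$, with the statement otherwise gracefully reducing to the global guarantee.
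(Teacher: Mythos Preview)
Your proposal is correct and follows essentially the same approach as the paper's proof: chain the global rate (Theorem~\ref{th:globalrate}) to obtain $q=p$, feed this into the discrete averaging corollary of Appendix~\ref{app:sec:averaging} to get $r=\min\{q/2+p-1,2p\}=3p/2-1$, and then invoke Corollary~\ref{lem:local_convratebnd} with $c\geq 3p/2+1$ so that the $1/k^{r+1}=1/k^{3p/2}$ term dominates. Your write-up is in fact more careful than the paper's (which cites Lemma~\ref{lem:local_avgerror} where the discrete averaging corollary is really what supplies $r$), and your remarks on constant compatibility and the $p\leq 2/3$ regime are valid caveats the paper does not address.
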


\begin{proof}
Putting together Theorem \ref{th:globalrate}, Lemma \ref{lem:local_avgerror}, and Corollary \ref{lem:local_convratebnd}, we can resolve the constants
\[
q = p, \qquad r = \min\{q/2+p-1,2p\} = \frac{3p}{2}-1
\]
and  resolves an overall convergence bound of 
$\bh(\bx_k) = O(k^{-3p/2})$.

\end{proof}
\iffalse

\section{Other discretization methods}
\label{sec:mdpt-method}
The following describes the implementation of the midpoint method on \AvgflowFW
\begin{eqnarray*}
\bs_k &=& \lmo_\mD(\bx_k)\\
\bar \bs_{k} &=& \bar \bs_{k-\frac{1}{2}} + \beta_k (\bs_k-\bar \bs_{k-\frac{1}{2}})\\
 \bx_{k+\frac{1}{2}} &=& \bx_k+ \frac{1}{2}\gamma_k (\bar \bs_k - \bx_k)\\
\bs_{k+\frac{1}{2}} &=& \lmo_\mD(\bx_{k+\frac{1}{2}})\\
\bar \bs_{k+\frac{1}{2}} &=& \bar \bs_{k} + \beta_{k+\frac{1}{2}} (\bs_{k+\frac{1}{2}}-\bar \bs_{k})\\
\bx_{k+1} &=&  \bx_k + \gamma_{k+\frac{1}{2}}(\bar \bs_{k+\frac{1}{2}} - \bx_{k+\frac{1}{2}})
\end{eqnarray*}
where $\beta_k = (\frac{c}{c+k})^p$ for $c \geq 0$ and $0\leq p \leq 1$. 

\fi

\end{document}